\providecommand{\U}[1]{\protect\rule{.1in}{.1in}}
\newtheorem{theorem}{\sc Theorem}[section]
\newtheorem{proposition}[theorem]{\sc Proposition}
\newtheorem{lemma}[theorem]{\sc Lemma}
\newtheorem{corollary}[theorem]{\sc Corollary}
\theoremstyle{definition}
\newtheorem{definition}[theorem]{\sc Definition}
\newtheorem{example}[theorem]{\sc Example}
\theoremstyle{remark}
\newtheorem{remark}[theorem]{\sc Remark}
\def\x1{x_1}
\def\x2{x_2}
\def\a1{a_1}
\def\a2{a_2}
\begin{document}
\title[Cocycle deformations]{Cocycle deformations for liftings of quantum linear spaces}
\dedicatory{Dedicated to Mia Cohen on the occasion of her retirement}\author{Alessandro Ardizzoni}
\address{Department of Mathematics, University of Ferrara, Via Machiavelli 35, Ferrara
I-44121, Italy}
\email{alessandro.ardizzoni@unife.it}
\author{Margaret Beattie}
\address{Department of Mathematics and Computer Science, Mount Allison University,
Sackville, NB E4L 1E6, Canada}
\email{mbeattie@mta.ca}
\author{Claudia Menini}
\address{Department of Mathematics, University of Ferrara, Via Machiavelli 35, Ferrara
I-44121, Italy}
\email{men@dns.unife.it}
\thanks{M. Beattie's research was supported by an NSERC Discovery Grant. This paper
was written while A. Ardizzoni and C. Menini were members of G.N.S.A.G.A. with
partial financial support from M.I.U.R. (PRIN 2007).}

\begin{abstract}
Let $A$ be a Hopf algebra over a field $K$ of characteristic $0$ and
suppose there is a coalgebra projection $\pi$ from $A$ to a sub-Hopf
algebra $H$ that splits the inclusion. If the projection is
$H$-bilinear, then $A$ is isomorphic to a biproduct $R \#_{\xi}H$
where $(R,\xi)$ is called a pre-bialgebra with cocycle in the
category $_{H}^{H}\mathcal{YD}$. The cocycle $\xi$ maps $R \otimes
R$ to $H$. Examples of this situation include the liftings of
pointed Hopf algebras with abelian group of points $\Gamma$ as
classified by Andruskiewitsch and Schneider
\cite{ASannals}. One asks when such
an $A$ can be twisted by a cocycle $\gamma:A\otimes A\rightarrow K$
to obtain a Radford biproduct. By results of Masuoka \cite{Mas2,
Mas1}, and Gr\"{u}nenfelder and Mastnak \cite{Grunenfelder-Mastnak},
this can always be done for the pointed liftings mentioned above.

In a previous paper \cite{ABM}, we showed that a natural candidate
for a twisting cocycle is { $\lambda \circ \xi$} where $\lambda\in
H^{\ast}$ is a total integral for $H$ and $\xi$ is as above. We also
computed the twisting cocycle explicitly for liftings of a quantum
linear plane and found some examples where the twisting cocycle we
computed was different from { $\lambda \circ \xi$}. In this note we
show that in many cases this cocycle is exactly $\lambda\circ\xi$
and give some further examples where this is not the case.  As well
we extend the cocycle computation to quantum linear spaces; there is
no restriction on the dimension.
\end{abstract}
\maketitle
\tableofcontents

\section{Introduction}

In \cite{ABM}, the authors studied twistings of Hopf algebras with a
coalgebra projection, i.e., Hopf algebras $A$ with a sub-Hopf
algebra $H$ such that there is an $H$-bilinear projection $\pi$ from
$A$ to $H$ that splits the inclusion. Such Hopf algebras can be
written as  $A \cong R\#_{\xi}H$ where $(R,\xi)$   is a
pre-bialgebra with cocycle $\xi:R\otimes R\rightarrow H$ in the
sense of \cite[3.64]{A.M.S.}.    It was shown that there is a
bijective correspondence between $H$-bilinear cocycles $\gamma\in
Z_{H}^{2}(A,K)$ and suitably defined left $H$-linear cocycles for
the pre-bialgebra $R$. A main theorem was that for $\gamma\in
Z_{H}^{2}(A,K)$ then $A^{\gamma}=R^{\gamma_{R}}\#_{\xi
_{\gamma_{R}}}H$ where $\gamma_{R}\in Z_{H}^{2}(R,K)$ is the
restriction of the cocycle $\gamma$ to the pre-bialgebra $R$ and
$\xi_{\gamma_{R}}$ is suitably defined. One interesting problem then
is to find a cocycle $\gamma\in Z_{H}^{2}(A,K)$, or equivalently
$\gamma_{R}\in Z_{H}^{2}(R,K)$, such that
$A^{\gamma}$ is a Radford biproduct, i.e., such that $\xi_{\gamma_{R}%
}=\varepsilon_{R\otimes R}$. As an example, the twisting cocycle which twists
the Radford biproduct $\mathcal{B}(V)\#k[\Gamma]$ of the group algebra
$K[\Gamma]$ of a finite abelian group $\Gamma$ and the Nichols algebra
$\mathcal{B}(V)$ for $V$ a quantum plane to a lifting of $\mathcal{B}%
(V)\#k[\Gamma]$ whose parameters are all nonzero scalars was explicitly computed.

\par One problem studied in \cite{ABM} is the following. Given a
cocycle $\gamma$ twisting $R\#_{\xi}H$ to a Radford biproduct, is
$\gamma_{R}^{-1}=\Lambda \circ\xi$ where $\Lambda$ is an   integral
on $H$ invariant under the adjoint action of $H$ on itself? It was
shown in \cite{ABM} that if $(\Lambda\circ\xi)^{-1}$ is a cocycle
for $R$, then it does twist $R\#_{\xi}H$ to a Radford biproduct. The
problem is that in general, it may not be a cocycle.

\par Counterexamples with pointed Hopf algebras of dimension $32$ in
\cite{ABM} show that in general it is not true that
$\lambda\circ\xi=\alpha$ is a cocycle that twists the Radford
biproduct to a lifting. In this note, we construct further
counterexamples where the Hopf algebras under consideration are
liftings of a quantum plane. One involves a Hopf algebra of
dimension $128$ and the other a family of Hopf algebras of dimension
$r^{4}s$ where $r,s$ are any integers greater than $1$, $r,s$ not
necessarily distinct. In Theorem \ref{thm: lambdaxi}, we find
sufficient conditions to have $\lambda\circ\xi$ equal to a cocycle
that twists the Radford biproduct to the lifting of a quantum plane.

\par To obtain this result we must first re-visit the computation of
the twisting cocycle done in \cite[Section 5]{ABM} without the
assumption that the parameters are all nonzero, and in Theorem
\ref{thm: connected}, we generalize this construction from the
twisting of a quantum plane to the twisting of any quantum linear
space. In particular, there is no
restriction on the dimension of the space and our theory includes
the case when $q=-1$. We refer the reader also to \cite[Sections
4,5]{Grunenfelder-Mastnak} where the problem of finding the twisting
cocycle is solved by applying ($q$-)exponential maps to Hochschild
cocycles.

\section{Preliminaries}

Throughout this paper, $\Gamma$
will denote a finite abelian group and  $H$ will denote the group algebra $K[\Gamma]$.
Also $W=\oplus_{i=1}^{\theta}Kx_{i}$ will denote a
quantum linear space with $x_{i}\in W_{g_{i}}^{\chi_{i}}$ where $g_{i}%
\in\Gamma$,
$\chi_{i}\in\widehat{\Gamma}$  and $
R:= \mathcal{B}(W)$ will be the Nichols algebra of $W$.
This means that we have:

\begin{itemize}
\item[ ] (QLS I) \hspace{2.3mm} $\chi_{i}(g_{j})\chi_{j}(g_{i})=1$ for $i\neq j$;

\item[ ] (QLS II) \hspace{2mm} $\chi_{i}(g_{i})$ is a primitive $r_{i}$th root
of unity.
\end{itemize}

The following is proved in \cite{AS1} or \cite{bdgconstructing}.

\begin{proposition}
\label{pr: lift} For $W$ a quantum linear space with
$\chi_{i}(g_{i})$ a primitive $r_{i}$th root of 1, all liftings $A:=
A(a_{i},a_{ij}| 1 \leq i,j\leq \theta)$ of
$\mathcal{B}(W)\#K[\Gamma]$ are Hopf algebras generated by the
grouplikes and by $(1,g_{i})$-primitives $x_{i}$, $1 \leq i \leq
\theta$ where
\begin{align*}
hx_{i}  &  = \chi_{i}(h)x_{i}h;\\
x_{i}^{r_{i}}  &  = a_{i}(1 - g_{i}^{r_{i}} );\\
x_{i}x_{j}  &  = \chi_{j}(g_{i})x_{j}x_{i} + a_{ij}(1 - g_{i}g_{j} )
,i\neq j.
\end{align*}
 The last equality implies $a_{ji} = - \chi_{j}(g_{i})^{-1}a_{ij} = -
\chi_{i}(g_{j})a_{ij}$, for all $i\neq j$.

Furthermore, if $a_{i} \neq0$, then
\begin{equation}
\label{eq: condnainonzero}g_{i}^{r_{i}}\neq1 \mbox{ and } \chi_{i}^{r_{i}} =
\varepsilon;
\end{equation}
and if $a_{ij} \neq0$ then
\begin{equation}
\label{eq: condaijnonzero}g_{i}g_{j} \neq1 \mbox{ and
} \chi_{i}\chi_{j} = \varepsilon.
\end{equation}

\end{proposition}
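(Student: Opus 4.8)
The plan is to derive the relations and their constraints from coproduct computations together with the known structure of a lifting, treating the deeper assertion that this list is complete as input from the lifting method. I would cite \cite{AS1,bdgconstructing} for the facts that $A$ is generated by $\Gamma$ and the skew-primitives $x_i$, that its coradical is $K[\Gamma]$, and that $\operatorname{gr}A\cong\mathcal{B}(W)\#K[\Gamma]$; granting these, the relations follow from short calculations, which is where I would concentrate.

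First I would record the Yetter--Drinfeld data as algebra relations: since $x_i\in W_{g_i}^{\chi_i}$, the $\Gamma$-action gives $hx_i=\chi_i(h)x_ih$, whence $g_ix_i=\chi_i(g_i)x_ig_i$. Writing $q_i:=\chi_i(g_i)$ and using $\Delta(x_i)=x_i\otimes g_i+1\otimes x_i$, the elements $x_i\otimes g_i$ and $1\otimes x_i$ of $A\otimes A$ satisfy a $q_i$-commutation relation, so the quantum binomial theorem gives
\[
\Delta(x_i^{r_i})=\sum_{k=0}^{r_i}\binom{r_i}{k}_{q_i}x_i^{k}\otimes g_i^{k}x_i^{r_i-k}.
\]
Because $q_i$ is a primitive $r_i$-th root of unity, the interior coefficients $\binom{r_i}{k}_{q_i}$ for $0<k<r_i$ vanish, so $u_i:=x_i^{r_i}$ is $(1,g_i^{r_i})$-primitive. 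I would then set $w_{ij}:=x_ix_j-\chi_j(g_i)x_jx_i$ for $i\neq j$, expand $\Delta(x_ix_j)$ and $\Delta(x_jx_i)$, and move group elements past the $x$'s using the action relation; the four mixed terms pair off, and condition (QLS I), $\chi_i(g_j)\chi_j(g_i)=1$, forces them to cancel exactly, leaving $\Delta(w_{ij})=w_{ij}\otimes g_ig_j+1\otimes w_{ij}$, so $w_{ij}$ is $(1,g_ig_j)$-primitive.

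Next I would invoke the structural input: in a lifting the coradical is $K[\Gamma]$ and $A$ is generated over it by the $x_i$, so the space $P_{1,g}(A)$ of $(1,g)$-primitives equals $K(1-g)$ plus the span of the $x_k$ with $g_k=g$. Both $u_i$ and $w_{ij}$ map to $0$ in $\operatorname{gr}A$ (they are the defining relations of $\mathcal{B}(W)$), hence lie in coradical degree $\leq1$; since their Yetter--Drinfeld types $(g_i^{r_i},\chi_i^{r_i})$ and $(g_ig_j,\chi_i\chi_j)$ do not occur among the generators, each is a scalar multiple of the trivial primitive, giving $u_i=a_i(1-g_i^{r_i})$ and $w_{ij}=a_{ij}(1-g_ig_j)$. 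Comparing $\Gamma$-characters then yields the constraints: $1-g$ is fixed by the adjoint action whereas $hu_ih^{-1}=\chi_i^{r_i}(h)u_i$ and $hw_{ij}h^{-1}=(\chi_i\chi_j)(h)w_{ij}$, so $a_i\neq0$ forces $\chi_i^{r_i}=\varepsilon$ and $g_i^{r_i}\neq1$, recovering \eqref{eq: condnainonzero}, while $a_{ij}\neq0$ forces $\chi_i\chi_j=\varepsilon$ and $g_ig_j\neq1$, recovering \eqref{eq: condaijnonzero}. For the identity relating $a_{ij}$ and $a_{ji}$, I would substitute the $(i,j)$-relation into the $(j,i)$-relation: the $x_jx_i$ terms cancel by $\chi_i(g_j)\chi_j(g_i)=1$, leaving $-\chi_i(g_j)a_{ij}(1-g_ig_j)=a_{ji}(1-g_ig_j)$, so $a_{ji}=-\chi_i(g_j)a_{ij}=-\chi_j(g_i)^{-1}a_{ij}$.

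The main obstacle is not any of these computations but the assertion that the family is both exhaustive and non-degenerate: that every lifting is generated by $\Gamma$ and the $x_i$, and that for each admissible choice of parameters the algebra presented by these relations is genuinely a Hopf algebra of the expected dimension, with no collapse of the intended PBW basis. This is exactly what the lifting method and the dimension/PBW arguments of \cite{AS1,bdgconstructing} provide, and I would appeal to them rather than reprove them.
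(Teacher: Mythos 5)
The paper itself offers no proof of this proposition: it is recorded as a known result with a pointer to \cite{AS1} and \cite{bdgconstructing}, so there is no in-paper argument to compare yours against line by line. Your reconstruction is correct and follows the standard lifting-method proof from those sources: the $q$-binomial computation showing $\binom{r_i}{k}_{q_i}=0$ for $0<k<r_i$, hence that $u_i=x_i^{r_i}$ is skew-primitive; the cancellation via (QLS~I) showing $w_{ij}=x_ix_j-\chi_j(g_i)x_jx_i$ is $(1,g_ig_j)$-primitive; the identification of the relevant skew-primitive spaces in a pointed Hopf algebra with coradical $K[\Gamma]$; the adjoint-character comparison giving \eqref{eq: condnainonzero} and \eqref{eq: condaijnonzero}; and the substitution giving $a_{ji}=-\chi_i(g_j)a_{ij}=-\chi_j(g_i)^{-1}a_{ij}$. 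You also correctly attribute the generation, exhaustiveness and PBW/dimension statements to the references, exactly as the paper does. The one step you assert without justification is that the types $(g_i^{r_i},\chi_i^{r_i})$ and $(g_ig_j,\chi_i\chi_j)$ ``do not occur among the generators,'' which is what forces $u_i$ and $w_{ij}$ into the line spanned by the trivial skew-primitive rather than into the span of some $x_k$. This does require a short argument: for $u_i$, if $(g_k,\chi_k)=(g_i^{r_i},\chi_i^{r_i})$ then $\chi_k(g_k)=q_i^{r_i^2}=1$, contradicting (QLS~II); for $w_{ij}$, note first that $k\neq i,j$ (else $\chi_j(g_j)=1$), and then (QLS~I) applied to the pairs $(i,k)$ and $(j,k)$ gives $1=\chi_i(g_ig_j)\,(\chi_i\chi_j)(g_i)=q_i^2$ and likewise $q_j^2=1$, whence $\chi_k(g_k)=(\chi_i\chi_j)(g_ig_j)=q_iq_j=1$, again contradicting (QLS~II). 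With those two lines added, your sketch is a faithful account of the proof the paper delegates to \cite{AS1} and \cite{bdgconstructing}.
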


In the next lemma, we list some well-known facts which we  will refer to later.

   \begin{lemma}
\label{re: chi1chi2varepsilon} For $W = \oplus_{i=1}^{\theta}W^{\chi_{i}%
}_{g_{i}}$ a quantum linear space as above, the following hold.

\begin{itemize}
\item[i)] If $\chi_{i}^{r_{i}}=\varepsilon$ (as occurs when $a_{i}\neq0$) then
$g_{i}^{r_{i}}$ commutes with all $x_{j}$.

\item[ii)] If $\chi_{i}\chi_{j}=\varepsilon$ (as occurs when $a_{ij}\neq0$)
and if we set $q:=\chi_{i}(g_{i})$ then

\begin{itemize}
\item[a)] $\chi_{i}(g_{j})=q$ and $\chi_{j}(g_{j})=\chi_{i}^{-1}(g_{j}%
)=\chi_{j}(g_{i})=q^{-1}$.

\item[b)] $g_{i}g_{j}$ commutes with all $x_{k}$ with $k$ different from
$i,j.$

\item[c)] $x_{i}$  commutes with $g_ig_j$ if and only if $q^2 = 1$.
\end{itemize}
\end{itemize}
\end{lemma}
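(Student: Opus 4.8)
The plan is to reduce every assertion to the single observation that, by the relation $hx_k=\chi_k(h)x_kh$, a grouplike $h\in\Gamma$ commutes with $x_k$ precisely when $\chi_k(h)=1$. Each statement then becomes the evaluation of a character on a product of the $g_i$, and all the work takes place in the abelian group $\widehat{\Gamma}$ using only the defining conditions (QLS I) and (QLS II) together with the hypothesis of the relevant item.

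For part i), I would write $g_i^{r_i}x_j=\chi_j(g_i)^{r_i}x_jg_i^{r_i}$, so it suffices to check that $\chi_j(g_i)^{r_i}=1$ for every $j$. When $j=i$ this is exactly (QLS II), since $\chi_i(g_i)$ is a primitive $r_i$th root of unity. When $j\neq i$, (QLS I) gives $\chi_j(g_i)=\chi_i(g_j)^{-1}$, while the hypothesis $\chi_i^{r_i}=\varepsilon$ evaluated at $g_j$ yields $\chi_i(g_j)^{r_i}=1$; combining these gives $\chi_j(g_i)^{r_i}=1$, as needed.

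For part ii), the hypothesis $\chi_i\chi_j=\varepsilon$ delivers the identities in (a) at once: evaluating at $g_i$ gives $\chi_j(g_i)=\chi_i(g_i)^{-1}=q^{-1}$, then (QLS I) gives $\chi_i(g_j)=\chi_j(g_i)^{-1}=q$, and evaluating $\chi_i\chi_j=\varepsilon$ at $g_j$ gives $\chi_j(g_j)=\chi_i(g_j)^{-1}=\chi_i^{-1}(g_j)=q^{-1}$. For (b), commuting $g_ig_j$ past $x_k$ with $k\neq i,j$ requires $\chi_k(g_ig_j)=1$; applying (QLS I) to the pairs $(i,k)$ and $(j,k)$ rewrites the left side as $\bigl(\chi_i(g_k)\chi_j(g_k)\bigr)^{-1}=\bigl((\chi_i\chi_j)(g_k)\bigr)^{-1}=\varepsilon(g_k)^{-1}=1$. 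Finally (c) is the analogous commutator with $x_i$ in place of $x_k$: here $\chi_i(g_ig_j)=\chi_i(g_i)\chi_i(g_j)=q\cdot q=q^2$ by part (a), so $x_i$ commutes with $g_ig_j$ exactly when $q^2=1$.

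Since these are routine character manipulations, there is no genuine obstacle; the only points requiring care are bookkeeping the direction of each application of (QLS I) (that is, which index gets inverted), and remembering that $\chi_i^{r_i}=\varepsilon$ and $\chi_i\chi_j=\varepsilon$ are equalities in $\widehat{\Gamma}$ and so may be evaluated at whichever grouplike is convenient. Note also that (c) must be derived from part (a) rather than from (QLS I), since (QLS I) applies only to distinct indices.
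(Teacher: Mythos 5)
Your proof is correct and follows essentially the same route as the paper's: commute the grouplike past $x_j$ via $hx_k=\chi_k(h)x_kh$ and reduce each claim to a character evaluation using (QLS I), (QLS II), and the stated hypothesis. The only differences are cosmetic — you split off the case $j=i$ in part i) and spell out the "clear" part ii)(a), which the paper leaves to the reader.
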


\begin{proof}
$i)$ We have $g_{i}^{r_{i}}x_{j}=\chi_{j}(g_{i})^{r_{i}}x_{j}g_{i}^{r_{i}}=\chi_{i}^{-r_{i}%
}(g_{j})x_{j}g_{i}^{r_{i}}=x_{j}g_{i}^{r_{i}}$.

$ii)$ Part $a)$ is clear. For part $b)$ we have
\[
g_{i}g_{j}x_{k}=\chi_{k}(g_{i}g_{j})x_{k}g_{i}g_{j}=\chi_{i}(g_{k})^{-1}%
\chi_{j}(g_{k})^{-1}x_{k}g_{i}g_{j}=(\chi_{i}\chi_{j})^{-1}(g_{k})x_{k}%
g_{i}g_{j}=x_{k}g_{i}g_{j}.
\]
Let us prove $c)$. By $a),$ we have $g_{i}g_{j}x_{i}=\chi_{i}(g_{i})\chi
_{i}(g_{j})x_{i}g_{i}g_{j}=q^{2}x_{i}g_{i}g_{j}$.
\end{proof}

In \cite[Section 5]{ABM} a cocycle $\alpha$ which twists the Radford biproduct
$A:=\mathcal{B}(V)\#K[\Gamma]=R\#H$, $V$ a quantum plane, to a lifting of this
graded Hopf algebra was computed. Then $A^{\alpha}=R^{\alpha_{R}%
}\#_{\varepsilon_{\alpha_{R}}}H$ is a lifting of $A$ with the given scalars.
However, there it was assumed that the scalars $a_{1},a_{2},a$ were nonzero
and the equalities in Lemma \ref{re: chi1chi2varepsilon} were used freely.

 Formulas using $q$-binomial coefficients $\binom{n}{i}_{q} $
 are essential to the arguments in \cite{ABM}
and may be found in \cite{Kassel}. For technical reasons, if $n,i$
or $n-i$ is negative, we set $\binom{n}{i}_{q} =0$.

\par If $x$ is $(1,g)$-primitive, $g$ grouplike and $gx=qxg$ then by
the $q$-binomial theorem \cite[IV.2.2]{Kassel}
\begin{equation}
  \Delta(x^{N}) =\sum_{0\leq n\leq N}\binom{N}%
{n}_{q}x^{n}g^{N-n}\otimes x^{N-n}   =\sum_{0\leq n\leq
N}\binom{N}{n}_{q^{-1}}g^{n}x^{N-n}\otimes x^{n}  \label{form:
Delta}
\end{equation}

and thus we have also
\begin{equation}
\Delta^{2}\left(  x^{ N}\right)  =\sum_{0\leq n\leq N}%
\sum_{0\leq m\leq N-n}\binom{N}{n}_{q}\binom{N-n}{m}_{q}x^{
n}g^{N-n}\otimes x^{ m}g^{N-n-m}\otimes x^{  N-n-m }. \label{form:
Deltasquare}
\end{equation}
Throughout we work over a field $K$ of characteristic 0, and all maps are
assumed to be $K$-linear. We will use Sweedler notation for the
comultiplication in a $K$-coalgebra $C$ but with the summation sign omitted,
namely $\Delta(x)=x_{(1)}\otimes x_{(2)}$ for $x\in C$. For $C$ a coalgebra
and $A$ an algebra the convolution multiplication in $\mathrm{Hom}(C,A)$ will
be denoted $\ast$. Composition of functions will be denoted by $\circ$ or by
juxtaposition when the meaning is clear.

We assume familiarity with the general theory of Hopf algebras; good
references are \cite{Sw}, \cite{Mo}. Radford biproducts were first introduced
in \cite{Rad}.

\section{The twisting cocycle for the lifting of a quantum linear space}

  In this  section we review the
computations in \cite[Section 5]{ABM} without assuming that the
lifting scalars are all nonzero (so that the conditions in Lemma
\ref{re: chi1chi2varepsilon} are not assumed) and extend the
computations from a quantum plane to a quantum linear space.

Recall that if $A$ is a bialgebra, a convolution invertible map $\gamma
:A\otimes A\rightarrow K$ is called a unital (or normalized) 2-cocycle for $A
$ when for all $x,y,z\in A$,
\begin{align}
\gamma\left(  y_{\left(  1\right)  }\otimes z_{\left(  1\right)  }\right)
\gamma\left(  x\otimes y_{\left(  2\right)  }z_{\left(  2\right)  }\right)
&  =\gamma\left(  x_{\left(  1\right)  }\otimes y_{\left(  1\right)  }\right)
\gamma\left(  x_{\left(  2\right)  }y_{\left(  2\right)  }\otimes z\right)
,\label{form: cocycle1 Hopf}\\
\gamma(x\otimes1)  &  =\gamma(1\otimes x)=\varepsilon_{A}(x).
\label{form: cocycle3}%
\end{align}

For a bialgebra $A$ with a sub-Hopf algebra $H$, we denote by $Z_{H}
^{2}\left(  A,K\right)  $  the space of $H$-bilinear $2$-cocycles
for $A $, i.e., the set of cocycles $\gamma$ as defined above such
that $\gamma(ha \otimes bh^\prime) =
\varepsilon(h)\varepsilon(h^\prime)\gamma(a \otimes b)$ for $a,b \in
A$, $h, h^\prime \in H$.  By \cite[Lemma 4.5]{ABM} every
$H$-bilinear cocycle $\gamma$ is also $H$-balanced, i.e., for $h\in
H$, $\gamma(x h \otimes y) = \gamma(x \otimes hy)$.

If $\gamma\in Z_{H}^{2}(A,K)$ then $A$ with multiplication
$m_{A^{\gamma}  }=\gamma\ast m_{A}\ast\gamma^{-1}$ twisted by
$\gamma$ is denoted $A^{\gamma}$ and is also a bialgebra
\cite{Doi-braided}.

\begin{lemma}
\label{rm: triple} Let $A$ be a lifting of $\mathcal{B}(W) \#
k[\Gamma]$, possibly the trivial one. Suppose that $\gamma: A
\otimes A \rightarrow K$ is $H$-bilinear and $H$-balanced. For $X =
x_{1}^{n_{1}} \ldots x_{\theta }^{n_{\theta}} \in A$ write
$\chi_{X}:= \chi_{1}^{n_{1}} \ast\ldots\ast
\chi_{\theta}^{n_{\theta}}$ and similarly for $Y,Z$. Then for
$h,g,l$ grouplike, checking the
cocycle condition (\ref{form: cocycle1 Hopf}) for a triple
$gX,hY,lZ$ is equivalent to checking for the triple $X,Y,Z$.
\end{lemma}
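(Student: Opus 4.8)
The plan is to reduce the claim to a statement about how $\gamma$ interacts with the natural $\widehat{\Gamma}$-grading of $A$. For each $h\in\Gamma$ conjugation $c_{h}\colon a\mapsto hah^{-1}$ is an algebra automorphism of $A$, and since $\Gamma$ is abelian these commute; as $K$ has characteristic $0$ and contains the values $\chi_{i}(h)$, the operators $c_{h}$ are simultaneously diagonalizable and we may write $A=\bigoplus_{\chi\in\widehat{\Gamma}}A^{\chi}$ with $A^{\chi}=\{a\in A: hah^{-1}=\chi(h)a\ \text{for all}\ h\in\Gamma\}$. The relation $hx_{i}=\chi_{i}(h)x_{i}h$ shows $x_{i}\in A^{\chi_{i}}$, while every grouplike lies in $A^{\varepsilon}$; hence a monomial $X=x_{1}^{n_{1}}\cdots x_{\theta}^{n_{\theta}}$ is homogeneous of degree $\chi_{X}$ and, crucially, $gX\in A^{\chi_{X}}$ for any grouplike $g$, since left multiplication by a grouplike does not change the conjugation degree.

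First I would prove the key technical fact, which is really the heart of the matter: if $a\in A^{\chi}$ and $b\in A^{\chi'}$, then $\gamma(a\otimes b)=0$ unless $\chi\chi'=\varepsilon$. This follows from the hypotheses on $\gamma$ alone. For $h\in\Gamma$, $H$-bilinearity gives $\gamma(a\otimes b)=\gamma(ha\otimes b)$; writing $ha=\chi(h)ah$ and applying $H$-balancedness yields $\gamma(ha\otimes b)=\chi(h)\gamma(a\otimes hb)$; finally $hb=\chi'(h)bh$ together with $H$-bilinearity gives $\gamma(a\otimes hb)=\chi'(h)\gamma(a\otimes b)$. Chaining these, $\gamma(a\otimes b)=(\chi\chi')(h)\gamma(a\otimes b)$ for every $h$, so $\chi\chi'=\varepsilon$ whenever $\gamma(a\otimes b)\neq0$.

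With this in hand the remainder is bookkeeping. From $\Delta(x_{i})=x_{i}\otimes1+g_{i}\otimes x_{i}$ and (\ref{form: Delta}) one sees that $\Delta$ of a monomial $M$ is a sum of terms $M'\,g_{M''}\otimes M''$ with $M',M''$ monomials satisfying $\chi_{M'}\chi_{M''}=\chi_{M}$, the grouplike $g_{M''}$ being exactly the group part of the right leg; consequently $\Delta(gM)=\sum gM'g_{M''}\otimes gM''$. Substituting $(gX,hY,lZ)$ into the two sides of (\ref{form: cocycle1 Hopf}) and expanding, I would strip the outer grouplikes $g,h,l$ together with the internal grouplikes off each factor $\gamma(\,\cdot\otimes\cdot\,)$ by repeatedly using $H$-bilinearity, $H$-balancedness, and the commutation rule $hM=\chi_{M}(h)Mh$; each term then equals a product of character values times the corresponding term for $(X,Y,Z)$. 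A priori these character factors depend on the summation indices, and this is precisely the point at which the argument could collapse; but the grading fact forces $\chi_{Y''}\chi_{Z''}=\chi_{X}^{-1}$ on every nonvanishing term of the left-hand side, and $\chi_{X'}\chi_{Y'}=\varepsilon$ together with $\chi_{X''}\chi_{Y''}=\chi_{Z}^{-1}$ on every nonvanishing term of the right-hand side. These relations collapse the index-dependent scalars, and both sides turn out to be multiplied by the same nonzero constant $\chi_{X}(h)^{-1}\chi_{Z}(l)$ (note that $g$ disappears entirely). Thus the two sides of (\ref{form: cocycle1 Hopf}) for $(gX,hY,lZ)$ are $\chi_{X}(h)^{-1}\chi_{Z}(l)$ times the respective sides for $(X,Y,Z)$, and since this scalar is nonzero the identity holds for one triple exactly when it holds for the other. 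The main obstacle is verifying that the bookkeeping scalars on the two sides genuinely coincide, which is why I would isolate and prove the grading fact before beginning the expansion.
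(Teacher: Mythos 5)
Your proof is correct, and at its core it runs along the same lines as the paper's: both arguments strip the grouplikes $g,h,l$ off each $\gamma$-factor using $H$-bilinearity, $H$-balancedness and the commutation rule $hM=\chi_{M}(h)Mh$, and both reduce to showing that the accumulated character scalars on the two sides of (\ref{form: cocycle1 Hopf}) are the same nonzero constant $\chi_{X}^{-1}(h)\chi_{Z}(l)$, with $g$ vanishing outright. Where you genuinely differ is in how the a priori index-dependent scalars are collapsed. The paper needs no extra lemma: since the comultiplication respects the $\widehat{\Gamma}$-grading, the characters of the two legs of $\Delta(M)$ multiply to $\chi_{M}$, so the factors $\chi_{Z_{(1)}}(l)$ and $\chi_{Z_{(2)}}(l)$ produced by the two $\gamma$'s on the left combine to $\chi_{Z}(l)$ term by term, and likewise $\chi_{X_{(1)}}^{-1}(h)\chi_{X_{(2)}}^{-1}(h)=\chi_{X}^{-1}(h)$ on the right. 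You instead isolate and prove the orthogonality fact that $\gamma$ vanishes on $A^{\chi}\otimes A^{\chi'}$ unless $\chi\chi'=\varepsilon$, and use the resulting relations among the degrees of the legs on the nonvanishing terms. That lemma is correct, correctly proved, and worth having (it is essentially the mechanism behind Lemma \ref{lm: qplbasic lemma}), but it is not needed here: the multiplicativity $\chi_{M'}\chi_{M''}=\chi_{M}$, which you already record, collapses the scalars on every term, vanishing or not, and gives a slightly shorter argument. One small caveat in your bookkeeping: for a nontrivial lifting the legs of $\Delta(M)$ need not reorder into single monomials times grouplikes, but they remain $\widehat{\Gamma}$-homogeneous (the lifting relations preserve the grading because $a_{i}\neq0$ forces $\chi_{i}^{r_{i}}=\varepsilon$ and $a_{ij}\neq0$ forces $\chi_{i}\chi_{j}=\varepsilon$), and homogeneity is all your argument actually uses.
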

\begin{proof}
\begin{align*}
&  \gamma(hY_{(1)} \otimes l Z_{(1)})\gamma( gX \otimes hY_{(2)}l Z_{(2)}) =
\gamma(gX_{(1)} \otimes h Y_{(1)})\gamma( gX_{(2)} h Y_{(2)} \otimes lZ)\\
\Longleftrightarrow &  \hspace{1mm} \chi_{Z}(l)\gamma(Y_{(1)} \otimes
Z_{(1)})\gamma( X h\otimes Y_{(2)} Z_{(2)}) = \chi_{Z}(l) \gamma( X_{(1)}h
\otimes Y_{(1)})\gamma( X_{(2)} h Y_{(2)} \otimes Z)\\
\Longleftrightarrow &  \hspace{1mm} \chi_{X}^{-1}(h)\gamma(Y_{(1)} \otimes
Z_{(1)})\gamma( X \otimes Y_{(2)} Z_{(2)}) = \chi_{X}^{-1}(h) \gamma( X_{(1)}
\otimes Y_{(1)}) \gamma( X_{(2)} Y_{(2)} \otimes Z).
\end{align*}
\end{proof}

Recall the following from \cite{ABM}.

\begin{lemma}
\label{lm: qplbasic lemma}\cite[Corollary 5.2]{ABM} \label{co: qlpcocycle} Let
$\gamma\in Z^{2}_{H}(A ,K)$ where $A = \mathcal{B}(V) \# K[\Gamma]$, $V$ a
quantum plane. Suppose that $\chi_{1}\chi_{2} = \varepsilon$ so that the
equalities in Lemma \ref{re: chi1chi2varepsilon}(ii) hold. If $q^{i+k} \neq
q^{j+l} $, then $\gamma^{\pm1}(x_{1}^{i}x_{2}^{j} \otimes x_{1}^{k}x_{2}%
^{l})=0.$
\end{lemma}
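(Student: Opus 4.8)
The plan is to exploit the $H$-bilinearity of $\gamma$ to convert the statement into a condition on how grouplikes act, and then use the cocycle identity with a carefully chosen grouplike to force the vanishing. Let me set $X = x_1^i x_2^j$ and $Y = x_1^k x_2^l$, so that $X \in W_{g}^{\chi}$ with $g = g_1^i g_2^j$ and $\chi = \chi_1^i \ast \chi_2^j$, and similarly $Y \in W_{g_1^k g_2^l}^{\chi_1^k \ast \chi_2^l}$. Under the hypothesis $\chi_1\chi_2 = \varepsilon$, Lemma \ref{re: chi1chi2varepsilon}(ii) gives $\chi_1(g_1) = \chi_1(g_2) = \chi_2(g_1)^{-1} = \chi_2(g_2)^{-1} = q$, so that the character $\chi_X$ evaluated on a grouplike of the form $g_1^a g_2^b$ produces a power of $q$; concretely $\chi_X(g_1^a g_2^b) = q^{(a-b)(i - j)}$ and likewise for $Y$. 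This is the source of the exponents $i+k$ and $j+l$ that appear in the statement, once one tracks the commutation relations.

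First I would write down the cocycle condition \eqref{form: cocycle1 Hopf} for the triple $(X, Y, g)$ where $g$ is a suitable grouplike, or alternatively compare the triple $(g, X, Y)$ against $(X, Y, g)$ using the $H$-bilinearity and $H$-balancedness encoded in Lemma \ref{rm: triple}. The key point is that moving a grouplike $g$ past $X$ or $Y$ inside $\gamma$ picks up a character value: using $H$-balancedness $\gamma(X g \otimes Y) = \gamma(X \otimes g Y)$ together with the commutation $g X = \chi_X(g)^{-1} X g$ (read off from $h x_i = \chi_i(h) x_i h$), one obtains a relation of the form $\chi_X(g)\,\gamma(X \otimes Y) = \chi_Y(g)^{-1}\,\gamma(X \otimes Y)$, or something structurally identical after the dust settles. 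Then choosing $g$ so that the two character values differ exactly when $q^{i+k} \neq q^{j+l}$ forces $\gamma(X \otimes Y) = 0$.

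The main obstacle is bookkeeping the coproduct terms correctly: $\Delta(X)$ and $\Delta(Y)$ are not primitive, so plugging $X, Y$ into the cocycle identity produces many terms via \eqref{form: Delta}, and I must argue that the lower-order terms either vanish by induction on the total degree $i+j+k+l$ or are themselves controlled by the same degree-matching hypothesis. The cleanest route is induction: assume the vanishing statement holds for all products of strictly smaller total degree, so that in the expansion of \eqref{form: cocycle1 Hopf} every term except the top one carries a factor $\gamma(\cdot \otimes \cdot)$ on strictly smaller monomials whose degree mismatch is inherited, and hence vanishes. What remains is the single leading term, from which the character-value comparison extracts the desired conclusion. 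The one delicate check is that the degree-mismatch condition $q^{i+k} \neq q^{j+l}$ is correctly preserved (or made vacuous) when passing to the subterms, which I would verify by direct substitution into the $q$-power formulas above. I would also want to confirm that the choice of grouplike $g$ can always be made to separate the relevant characters whenever the stated inequality holds, which follows from the structure of $\langle g_1, g_2\rangle$ inside $\Gamma$ and the explicit $q$-power expressions for $\chi_X$ and $\chi_Y$.
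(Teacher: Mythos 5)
Your second paragraph already contains the entire proof, and it is exactly the standard argument behind \cite[Corollary 5.2]{ABM}: writing $X=x_1^ix_2^j$, $Y=x_1^kx_2^l$, $\chi_X=\chi_1^i\ast\chi_2^j$, $\chi_Y=\chi_1^k\ast\chi_2^l$, one has for any grouplike $g$
\[
\gamma(X\otimes Y)=\gamma(gX\otimes Y)=\chi_X(g)\,\gamma(Xg\otimes Y)=\chi_X(g)\,\gamma(X\otimes gY)=\chi_X(g)\chi_Y(g)\,\gamma(X\otimes Yg)=\chi_X(g)\chi_Y(g)\,\gamma(X\otimes Y),
\]
using left $H$-bilinearity, the relation $gX=\chi_X(g)Xg$, $H$-balancedness, and right $H$-bilinearity in turn; taking $g=g_1$ and using $\chi_1(g_1)=q$, $\chi_2(g_1)=q^{-1}$ from Lemma \ref{re: chi1chi2varepsilon}(ii) gives $\chi_X(g_1)\chi_Y(g_1)=q^{(i+k)-(j+l)}\neq 1$ under the hypothesis, forcing $\gamma(X\otimes Y)=0$. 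Your third paragraph, however, is a misdiagnosis: the cocycle identity \eqref{form: cocycle1 Hopf} is never invoked, so $\Delta(X)$ and $\Delta(Y)$ are never expanded, there are no lower-order terms, and no induction on total degree is needed --- only $H$-bilinearity, $H$-balancedness and the commutation $hx_i=\chi_i(h)x_ih$ enter. Two small repairs: (a) your formula $\chi_X(g_1^ag_2^b)=q^{(a-b)(i-j)}$ should read $q^{(a+b)(i-j)}$, since $\chi_1(g_2)=q$ and $\chi_2(g_2)=q^{-1}$ as well (harmless, as only $g=g_1$ is needed); (b) you should justify the claim for $\gamma^{-1}$ explicitly, e.g.\ by noting that $\gamma^{-1}$ is again $H$-bilinear and $H$-balanced (it lies in $Z^2_H(A^{\gamma},K)$ and $A^{\gamma}$ carries the same $H$-bimodule structure and the same relations $hx_i=\chi_i(h)x_ih$), so the identical two-line computation applies verbatim.
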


\begin{lemma}
\cite[Corollary 4.4]{ABM}\cite[Lemma 1.4]{Chen} \label{co: beattie} For $A$ a
bialgebra, let $\beta\in Z^{2}(A,K)$ and $\gamma\in Z^{2}(A^{\beta},K).$ Then
$\gamma\ast\beta\in Z^{2}(A,K).$
\end{lemma}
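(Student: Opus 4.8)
The plan is to deduce the cocycle condition for $\gamma\ast\beta$ from the observation that iterating two twists coincides with a single twist by the convolution product $\gamma\ast\beta$. First I would dispose of the two easy requirements. Since $\beta$ and $\gamma$ are convolution invertible, so is $\gamma\ast\beta$, with inverse $\beta^{-1}\ast\gamma^{-1}$; indeed $(\gamma\ast\beta)\ast(\beta^{-1}\ast\gamma^{-1})=\gamma\ast(\beta\ast\beta^{-1})\ast\gamma^{-1}=\gamma\ast\gamma^{-1}=\varepsilon_{A\otimes A}$. The normalization (\ref{form: cocycle3}) for $\gamma\ast\beta$ follows at once from that of $\gamma$ and $\beta$, because $(\gamma\ast\beta)(x\otimes1)=\gamma(x_{(1)}\otimes1)\beta(x_{(2)}\otimes1)=\varepsilon(x_{(1)})\varepsilon(x_{(2)})=\varepsilon_A(x)$, and symmetrically for $(\gamma\ast\beta)(1\otimes x)$.

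The substance is the cocycle identity (\ref{form: cocycle1 Hopf}), which I would obtain conceptually rather than by a direct Sweedler expansion. The comultiplication is unchanged under twisting, so the convolution product on $\mathrm{Hom}(A\otimes A,-)$ used to twist $A^\beta$ by $\gamma$ is literally the same operation as the one used to twist $A$ by $\beta$. Hence, recalling $m_{A^\beta}=\beta\ast m_A\ast\beta^{-1}$, twisting once more by $\gamma$ gives
\[
m_{(A^\beta)^\gamma}=\gamma\ast m_{A^\beta}\ast\gamma^{-1}=\gamma\ast\beta\ast m_A\ast\beta^{-1}\ast\gamma^{-1}=(\gamma\ast\beta)\ast m_A\ast(\gamma\ast\beta)^{-1},
\]
the last equality using associativity of convolution together with $(\gamma\ast\beta)^{-1}=\beta^{-1}\ast\gamma^{-1}$. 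In other words $(A^\beta)^\gamma$ and the candidate twist $A^{\gamma\ast\beta}$ carry identical multiplication and (unchanged) comultiplication. Since $\beta\in Z^2(A,K)$, the object $A^\beta$ is a bialgebra, and since $\gamma\in Z^2(A^\beta,K)$, the iterated twist $(A^\beta)^\gamma$ is again a bialgebra; in particular its product is associative.

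Finally I would invoke the converse of the twisting construction: a convolution-invertible, normalized cochain $\sigma\colon A\otimes A\to K$ lies in $Z^2(A,K)$ if and only if the twisted product $\sigma\ast m_A\ast\sigma^{-1}$ is associative, equivalently $A^\sigma$ is a bialgebra. Applying this to $\sigma=\gamma\ast\beta$, whose invertibility and normalization were checked above and whose twisted product is associative by the previous paragraph, yields $\gamma\ast\beta\in Z^2(A,K)$. The one genuine ingredient beyond bookkeeping is this converse implication, namely that associativity of the twisted product forces the cocycle identity; it is the nontrivial half of the standard correspondence between $2$-cocycles and bialgebra twists (see \cite{Doi-braided}), whereas the forward half is already recorded in the preliminaries. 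Alternatively, one can bypass the converse by expanding (\ref{form: cocycle1 Hopf}) for $\gamma\ast\beta$ directly and rewriting the $\gamma$-factors through the multiplication of $A^\beta$, but this is the longer, purely computational route and I would keep it only as a fallback.
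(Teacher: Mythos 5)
Your reduction $m_{(A^\beta)^\gamma}=(\gamma\ast\beta)\ast m_A\ast(\gamma\ast\beta)^{-1}$ is correct, as are the normalization and invertibility checks, but the proof collapses at the final step: the ``converse of the twisting construction'' you invoke is false for the two-sided twist used in this paper. Associativity of $\sigma\ast m_A\ast\sigma^{-1}$ (indeed, $A^\sigma$ being a bialgebra) does \emph{not} force $\sigma\in Z^{2}(A,K)$. The cleanest counterexample is $A=K[G]$ for a finite group $G$: for any normalized convolution-invertible $\sigma$ one has $\sigma^{-1}(g\otimes h)=\sigma(g\otimes h)^{-1}$ on grouplikes, hence $m_{A^{\sigma}}(g\otimes h)=\sigma(g\otimes h)\,gh\,\sigma(g\otimes h)^{-1}=gh$ and $A^{\sigma}=A$ is a bialgebra, while $\sigma$ restricted to $G\times G$ need not be a group $2$-cocycle (already for $G=\mathbb{Z}/3$, condition (\ref{form: cocycle1 Hopf}) at $x=y=z=g$ forces $\sigma(g\otimes g^{2})=\sigma(g^{2}\otimes g)$, a genuine constraint). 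The equivalence you want does hold for the \emph{one-sided} twist with product $\sigma(x_{(1)}\otimes y_{(1)})x_{(2)}y_{(2)}$, where applying $\varepsilon$ to the associativity identity recovers (\ref{form: cocycle1 Hopf}); for the two-sided twist, applying $\varepsilon$ only returns $\sigma\ast\sigma^{-1}=\varepsilon$, so the cocycle data is genuinely lost in passing from $\sigma$ to $A^{\sigma}$. The paper's own remark after Theorem \ref{thm: connected} on lazy cocycles already signals that the fibre of $\sigma\mapsto A^{\sigma}$ over a fixed bialgebra is large.

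Note that the paper gives no proof of this lemma; it quotes it from \cite{ABM} and \cite{Chen}, where it is established by exactly the computation you set aside as a fallback: expand (\ref{form: cocycle1 Hopf}) for $\gamma\ast\beta$, rewrite the inner arguments $y_{(2)}z_{(2)}$ and $x_{(2)}y_{(2)}$ of the $\gamma$-factors through $m_{A^{\beta}}=\beta\ast m_A\ast\beta^{-1}$, and cancel using the cocycle condition for $\gamma$ over $A^{\beta}$ together with that for $\beta$ over $A$. That direct route is the one that works; your structural shortcut through $(A^\beta)^\gamma=A^{\gamma\ast\beta}$ proves only that $A^{\gamma\ast\beta}$ is a bialgebra, which is strictly weaker than the assertion $\gamma\ast\beta\in Z^{2}(A,K)$.
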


\vspace{1mm}

\begin{proposition}
\label{pr: gamma(ai)}   Let $A=R\#H$
where $R,H,W$ are as defined thoughout.   For
 $a_{i}\in K$, $ 1 \leq i \leq
\theta$, define $H$-bilinear
maps $\gamma_{a_{i}}:=\gamma_{i}$, $i=1,\ldots,\theta$, from
$A\otimes A$ to $K$ as follows: $\gamma _{i}=\varepsilon$ on all
$X\otimes Y$ for $X,Y$ elements of $\mathcal{B}(W)$ of the form
$x_{1}^{N_{1}}\ldots x_{\theta}^{N_{\theta}}$ except that for
$0<m<r_{i}$,
\begin{equation}
\gamma_{i}(x_{i}^{m}\otimes x_{i}^{r_{i}-m})=a_{i}, \label{eq: gammai}%
\end{equation}
and $\gamma_{i}$ is then extended to $A\otimes A$ by $H$-bilinearity. If
$a_{i}\neq0$, assume that $\chi_{i}^{r_{i}}=\varepsilon$.

The maps $\gamma_{i}$ lie in $Z^{2}_{H}(A,K)$ and furthermore these cocycles
pairwise commute.
\end{proposition}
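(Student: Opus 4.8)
The plan is to write $\gamma_i=\mathbf{u}\varepsilon+\phi_i$, where $\mathbf{u}\varepsilon$ is the convolution unit $x\otimes y\mapsto \varepsilon(x)\varepsilon(y)$ and $\phi_i$ is the $H$-bilinear map supported, on monomials, exactly on the pairs $(x_i^m,x_i^{r_i-m})$ with $0<m<r_i$, where it takes the value $a_i$. The structural fact I would exploit throughout is that, since $R=\mathcal{B}(W)$ is the Nichols algebra (not a genuine lifting), $A=\mathcal{B}(W)\#K[\Gamma]$ is $\mathbb{N}^{\theta}$-graded as a bialgebra, with $x_i$ in degree $e_i$ and the grouplikes in degree $0$; in particular $\Delta$ preserves this grading. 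Reading off the support of $\phi_i$, the scalar $\phi_i(U\otimes V)$ can be nonzero only when the $R$-components of $U$ and $V$ are powers $x_i^{p}$ and $x_i^{s}$ of the single variable $x_i$ with $p+s=r_i$.

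Convolution invertibility is then immediate: I would show $\phi_i*\phi_i=0$ by a degree count. A nonzero term of $(\phi_i*\phi_i)(X\otimes Y)$ would force the $R$-degree of $X$ to be $(m_1+m_2)e_i$ and that of $Y$ to be $(2r_i-m_1-m_2)e_i$ with $0<m_1,m_2<r_i$, but $x_i^{r_i}=0$ in $\mathcal{B}(W)$ then makes one of the two factors vanish. Hence $\gamma_i^{-1}=\mathbf{u}\varepsilon-\phi_i$, and (\ref{form: cocycle3}) holds because $\phi_i$ vanishes as soon as one argument is grouplike.

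For the cocycle identity (\ref{form: cocycle1 Hopf}) I would first invoke Lemma \ref{rm: triple} to reduce to triples of monomials $X,Y,Z$. The grading then does the real work: tracking $R$-degrees through $\Delta(X),\Delta(Y),\Delta(Z)$ and the products in (\ref{form: cocycle1 Hopf}), and splitting each $\gamma_i$ into its $\mathbf{u}\varepsilon$- and $\phi_i$-parts, one sees that both sides vanish unless $X,Y,Z$ are all powers of the single variable $x_i$ (the grouplikes having already been stripped by Lemma \ref{rm: triple}). Equivalently, $\gamma_i=\bar\gamma_i\circ(p_i\otimes p_i)$ is the pull-back of the corresponding one-variable map $\bar\gamma_i$ along the Hopf algebra surjection $p_i\colon A\to\mathcal{B}(Kx_i)\#K[\Gamma]$ killing $x_j$ for $j\neq i$; since pulling a cocycle back along a bialgebra map yields a cocycle, the whole question collapses to the quantum line. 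That remaining one-variable identity follows from the $q$-binomial formulas (\ref{form: Delta}) and (\ref{form: Deltasquare}) together with $q^{r_i}=1$, and is the $\theta=1$ case of the computation in \cite[Section 5]{ABM}.

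Finally, for commutativity I would write $\gamma_i*\gamma_j-\gamma_j*\gamma_i=\phi_i*\phi_j-\phi_j*\phi_i$ and again use the grading: $(\phi_i*\phi_j)(X\otimes Y)$ is nonzero only when $X$ has bidegree $(m,n)$ and $Y$ has bidegree $(r_i-m,r_j-n)$ in the variables $x_i,x_j$, with all other degrees zero, so I may take $X=x_i^{m}x_j^{n}$ and $Y=x_i^{r_i-m}x_j^{r_j-n}$ with $0<m<r_i$, $0<n<r_j$. Computing $\Delta(X)$ and $\Delta(Y)$ from (\ref{form: Delta}), exactly one term survives in each of the two convolutions, and after using $H$-bilinearity, $H$-balancedness and the commutation rule $g x_k=\chi_k(g)x_k g$ one finds that $(\phi_i*\phi_j)(X\otimes Y)$ and $(\phi_j*\phi_i)(X\otimes Y)$ differ only by the factor $\bigl(\chi_i(g_j)\chi_j(g_i)\bigr)^{n(r_i-m)}$, which equals $1$ by (QLS I). I expect this last character bookkeeping to be the main obstacle, but it is precisely what (QLS I) is designed to neutralize; the grading reductions above are what make the computation finite and keep the many potential cross-terms from ever appearing.
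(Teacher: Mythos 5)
Your overall strategy is essentially the paper's: reduce to monomial triples via Lemma \ref{rm: triple}, observe that both sides of (\ref{form: cocycle1 Hopf}) vanish unless all variables other than $x_{i}$ are absent, and defer the resulting one-variable identity to \cite[Proposition 5.3]{ABM}. Your additions are correct and welcome: the explicit inverse $\gamma_{i}^{-1}=\mathbf{u}\varepsilon-\phi_{i}$ via the degree count showing $\phi_{i}\ast\phi_{i}=0$, and the observation that $\gamma_{i}$ is the pullback of the one-variable cocycle along the bosonized projection $p_{i}$, package the reduction more transparently than the paper's one-line appeal to the support of $\gamma_i$.

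However, the final step of your commutativity argument is wrong as stated. Carrying out the computation you describe, the unique surviving term of $\Delta(x_{i}^{m}x_{j}^{n})$ for $\phi_{i}\ast\phi_{j}$ is $x_{i}^{m}g_{j}^{n}\otimes x_{j}^{n}$ and for $\phi_{j}\ast\phi_{i}$ it is $g_{i}^{m}x_{j}^{n}\otimes x_{i}^{m}$, which yields
\[
(\phi_{i}\ast\phi_{j})(x_{i}^{m}x_{j}^{n}\otimes x_{i}^{r_{i}-m}x_{j}^{r_{j}-n})=\chi_{i}(g_{j})^{n(r_{i}-m)}a_{i}a_{j},\qquad
(\phi_{j}\ast\phi_{i})(x_{i}^{m}x_{j}^{n}\otimes x_{i}^{r_{i}-m}x_{j}^{r_{j}-n})=\chi_{j}(g_{i})^{(r_{i}-m)(r_{j}-n)}a_{i}a_{j}.
\]
The two exponents $n(r_{i}-m)$ and $(r_{i}-m)(r_{j}-n)$ are not negatives of one another, so after using (QLS I) to write $\chi_{j}(g_{i})=\chi_{i}(g_{j})^{-1}$ the ratio of the two values is $\chi_{i}(g_{j})^{n(r_{i}-m)+(r_{i}-m)(r_{j}-n)}=\chi_{i}(g_{j})^{(r_{i}-m)r_{j}}$, not $\bigl(\chi_{i}(g_{j})\chi_{j}(g_{i})\bigr)^{n(r_{i}-m)}$. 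This is not forced to be $1$ by (QLS I) alone: take $r_{i}=r_{j}=2$ and $\chi_{i}(g_{j})$ a primitive fifth root of unity with $\chi_{j}(g_{i})=\chi_{i}(g_{j})^{-1}$; then (QLS I) holds but $\chi_{i}(g_{j})^{(r_{i}-m)r_{j}}=\chi_{i}(g_{j})^{2}\neq 1$. What actually kills the factor is the standing hypothesis of the proposition that $\chi_{j}^{r_{j}}=\varepsilon$ whenever $a_{j}\neq 0$ (so that $\phi_{j}\neq 0$): then $\chi_{i}(g_{j})^{r_{j}}=\chi_{j}^{-r_{j}}(g_{i})=1$. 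This is exactly the identity the paper invokes at the corresponding point of its proof, and your argument needs it too; replace the appeal to (QLS I) by an appeal to $\chi_{j}^{r_{j}}=\varepsilon$ and the proof goes through.
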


\begin{proof}
If $a_{i} = 0$, then $\gamma_{i} = \varepsilon$ and satisfies the conditions
to be a cocycle. Thus we assume that the $a_{i}$ are nonzero and that
$\chi_{i}^{r_{i}} = \varepsilon$. This condition ensures that $\gamma_{i} $ is
$H$-balanced since for any grouplike $h$, $\gamma_{i}(x_{i}^{m} h \otimes
x_{i}^{r_{i} - m}) = \chi_{i}^{-m}(h) a_{i} = \gamma_{i}(x_{i}^{m} \otimes h
x_{i}^{r_{i} - m})$.

By the definition of $\gamma_{i}$, condition (\ref{form: cocycle3}) holds. By
Lemma \ref{rm: triple} it suffices to check condition
(\ref{form: cocycle1 Hopf}) for the triple $X,Y,Z$ where $X,Y,Z$ have the form
$x_{1}^{N_{1}}\ldots x_{\theta}^{N_{\theta}}$. From the definition of
$\gamma_{i}$ both sides of (\ref{form: cocycle1 Hopf}) are $0$ unless all
exponents $N_{j}$, $i\neq j$ are $0$. The proof that $\gamma_{i}$ is a cocycle
now follows as in the proof of \cite[Proposition 5.3]{ABM}.

It remains to show that the cocycles pairwise commute. We show that
$\gamma_{1} $ and $\gamma_{2}$ commute. The proof that $\gamma_{i}$ and
$\gamma_{j}$ commute for any $i \neq j$ is the same. If either $a_{1}$ or
$a_{2}$ is $0$, there is nothing to prove and so we assume that the scalars
are nonzero and $\chi_{i}^{r_{i}} = \varepsilon$, $i = 1,2$. Apply $\gamma_{1}
\ast\gamma_{2}$ and $\gamma_{2} \ast\gamma_{1}$ to $x_{1}^{i}x_{2}^{j} \otimes
x_{1}^{k}x_{2}^{l} $ since $\gamma_{1} \ast\gamma_{2}$ or $\gamma_{2}
\ast\gamma_{1}$ will be $\varepsilon_{R \otimes R} $ applied to any other
elements of $R \otimes R$. Then using the fact that the $\gamma_{i}$ are
$H$-balanced and $H$-bilinear
\begin{align*}
\gamma_{1} \ast\gamma_{2}(x_{1}^{i}x_{2}^{j} \otimes x_{1}^{k}x_{2}^{l})  &  =
\gamma_{1}(x_{1}^{i}g_{2}^{j} \otimes x_{1}^{k})\gamma_{2}(x_{2}^{j} \otimes
x_{2}^{l})\\
&  = \gamma_{1}(x_{1}^{i} \otimes g_{2}^{j}x_{1}^{k})\gamma_{2}(x_{2}^{j}
\otimes x_{2}^{l}) =(\chi_{1}(g_{2}))^{jk}\delta_{i+k,r_{1}}\delta_{j+l,r_{2}%
}a_{1}a_{2} ,
\end{align*}
while
\begin{align*}
\gamma_{2} \ast\gamma_{1}(x_{1}^{i}x_{2}^{j} \otimes x_{1}^{k}x_{2}^{l})  &  =
\gamma_{2}( x_{2}^{j} \otimes g_{1}^{k}x_{2}^{l})\gamma_{1}(x_{1}^{i} \otimes
x_{1}^{k})\\
&  = (\chi_{2}(g_{1}))^{kl}\delta_{i+k,r_{1}}\delta_{j+l,r_{2}}a_{1}a_{2} =
(\chi_{1}(g_{2}))^{-kl}\delta_{i+k,r_{1}}\delta_{j+l,r_{2}}a_{1}a_{2}%
\end{align*}
since $\chi_{1}(g_{2})\chi_{2}(g_{1}) = 1$. If $j+l =r_{2}$ then $\chi
_{1}(g_{2})^{j+l} = \chi_{1}(g_{2})^{r_{2}} = \chi_{2}^{-r_{2}}(g_{1}) = 1$
and so these expressions are equal and the proof is complete.
\end{proof}

\begin{lemma}
\label{rm: liftings} Let $W,A,R,H$
and $\gamma_i:= \gamma_{a_i}$ be as in Proposition \ref{pr:
gamma(ai)}.

i)  Let
$\beta_{i}:=\gamma_{b_i}:A\otimes A\rightarrow K$ for some scalars
$b_{i}$.  Then $\gamma_{i}\ast\beta_{j}=\beta_{j}\ast\gamma_{i}$
for all $i,j$. Moreover $\gamma_{i}\ast\beta_{i}(x_{i}^{m}\otimes x_{i}%
^{n})=\delta_{r_{i},n+m}(a_{i}+b_{i}).$ As a consequence  $\gamma_{a_{i}}%
^{-1}=\gamma_{(-a_{i})}$.

ii)  Let $B$ be the lifting of $A$ with parameter $a_i$. Then
$A^{\gamma_{i}}\cong B$.

iii) For any map $\gamma:A\otimes A\rightarrow K$, we have that
$\gamma\in Z_{H}^{2}(A,K)$ if and only if $\gamma\in
Z_{H}^{2}(A^{\gamma_{i}},K)$.

iv) We have that $\gamma_{1}\ast\ldots\ast\gamma_{\theta}\in Z_{H}^{2}(A,K)$.

v) For any map $\gamma:A\otimes A\rightarrow K$, we have that $\gamma\in
Z_{H}^{2}(A,K)$ if and only if $\gamma\in Z_{H}^{2}(A^{\gamma_{1}\ast
\ldots\ast\gamma_{\theta}},K)$.

vi)  Let $B$ be the lifting of $A$ with parameters
$a_1,\dots,a_{\theta}$. Then
$A^{\gamma_{1}\ast\ldots\ast\gamma_{\theta}}\cong B$.
\end{lemma}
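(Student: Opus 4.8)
The plan is to read the six parts as a single cascade resting on the explicit shape of $\gamma_i=\gamma_{a_i}$ and on Lemma~\ref{co: beattie}. For (i), I would first observe that the computation in Proposition~\ref{pr: gamma(ai)} showing the $\gamma_{a_i}$ pairwise commute used only the support of the maps and the relation $\chi_i(g_j)\chi_j(g_i)=1$, never the specific scalars; the identical computation gives $\gamma_i\ast\beta_j=\beta_j\ast\gamma_i$ for all $i,j$. For the diagonal formula I would evaluate $\gamma_i\ast\beta_i$ on $x_i^m\otimes x_i^n$ by expanding $\Delta(x_i^m)$ and $\Delta(x_i^n)$ via (\ref{form: Delta}) and using $H$-bilinearity and $H$-balancedness to absorb the grouplike factors; since each of $\gamma_i,\beta_i$ is $\varepsilon$ except on pairs $x_i^s\otimes x_i^{r_i-s}$, the only surviving summands are the one where $\gamma_i$ acts as $\varepsilon$ (contributing $b_i$) and the one where $\beta_i$ acts as $\varepsilon$ (contributing $a_i$), a summand with both factors nontrivial being impossible on degree grounds. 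This yields $\gamma_i\ast\beta_i(x_i^m\otimes x_i^n)=\delta_{r_i,n+m}(a_i+b_i)$, and the choice $b_i=-a_i$ gives $\gamma_{a_i}\ast\gamma_{-a_i}=\varepsilon_{A\otimes A}$, i.e. $\gamma_{a_i}^{-1}=\gamma_{-a_i}$.

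For (ii) the guiding fact is that cocycle twisting leaves the coalgebra untouched and that $\gamma_i$ is $\varepsilon$ on every tensor having a tensorand in $H$, so that $h\cdot_{\gamma_i}a=ha=a\cdot_{\gamma_i}h$ for $h\in H$; hence $A^{\gamma_i}$ is again pointed with group $\Gamma$, generated by $\Gamma$ and the unchanged $(1,g_j)$-primitives $x_j$, and so is a lifting of $\mathcal B(W)\#K[\Gamma]$ in the sense of Proposition~\ref{pr: lift}. I would then identify its parameters by a short computation of twisted products: $h\cdot_{\gamma_i}x_j$ and $x_j\cdot_{\gamma_i}x_k$ are unaltered, whereas the $r_i$-fold twisted power of $x_i$ evaluates to $a_i(1-g_i^{r_i})$, exactly as in \cite[Proposition 5.3]{ABM} but with the calculation now confined to the single variable $x_i$. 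This pins $A^{\gamma_i}$ down as the lifting $B$ with parameter $a_i$.

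Part (iii) is the heart of the lemma and the step I expect to be the main obstacle. Using Lemma~\ref{co: beattie}, together with the reverse twist by $\gamma_i^{-1}=\gamma_{-a_i}$ (which is a cocycle for $A^{\gamma_i}$ because $(A^{\gamma_i})^{\gamma_i^{-1}}=A$), one gets the equivalence $\delta\in Z^2_H(A^{\gamma_i},K)\Longleftrightarrow\delta\ast\gamma_i\in Z^2_H(A,K)$ for every $H$-bilinear, normalized, convolution-invertible $\delta$. Since $H$-bilinearity, normalization and convolution-invertibility are literally the same conditions for $A$ and $A^{\gamma_i}$ (the coalgebra and the $H$-action agree), only the cocycle identity (\ref{form: cocycle1 Hopf}) can separate the two cocycle spaces, and (iii) becomes the assertion that $Z^2_H(A,K)$ is stable under convolution by $\gamma_i^{\pm1}$. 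To prove this stability I would reduce (\ref{form: cocycle1 Hopf}) to monomial triples $X,Y,Z$ by Lemma~\ref{rm: triple} and compare the two internal products $Y_{(2)}Z_{(2)}$ and $X_{(2)}Y_{(2)}$ computed in $A$ versus in $A^{\gamma_i}=B$; they differ only when straightening produces a factor $x_i^{r_i}$, replaced in $B$ by $a_i(1-g_i^{r_i})$, and the task is to show that after applying the $H$-bilinear $\gamma$ and summing over the coproduct these corrections reorganize---via the cocycle identity for $\gamma_i$ itself---so that (\ref{form: cocycle1 Hopf}) holds in $B$ exactly when it holds in $A$. Controlling this cancellation is where the real work lies.

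The remaining parts then bootstrap. For (iv) I would induct on $\theta$: the equivalence of (iii) gives $\gamma_1\ast\cdots\ast\gamma_\theta\in Z^2_H(A,K)\Longleftrightarrow\gamma_1\ast\cdots\ast\gamma_{\theta-1}\in Z^2_H(A^{\gamma_\theta},K)$, and by (iii) the latter space equals $Z^2_H(A,K)$, so the inductive hypothesis applies with base case $\gamma_1\in Z^2_H(A,K)$ from Proposition~\ref{pr: gamma(ai)}; the commutativity from (i) makes the ordering immaterial. For (v) I would iterate (iii) along the tower $A^{\gamma_1\ast\cdots\ast\gamma_\theta}=(\cdots(A^{\gamma_\theta})\cdots)^{\gamma_1}$, which is legitimate because $m_{(A^\beta)^\delta}=m_{A^{\delta\ast\beta}}$ and each $\gamma_k$ is, by (iii)/(iv), a cocycle for the relevant intermediate twist. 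Finally (vi) follows by applying the one-variable computation behind (ii) at each rung of this tower: since the $\gamma_k$ commute, the $k$-th twist turns on the parameter $a_k$ and touches only the $x_k^{r_k}$ relation, so the tower terminates at the lifting $B$ with the full set of parameters $a_1,\dots,a_\theta$.
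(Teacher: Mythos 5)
Your parts (i), (ii), (iv), (v) and (vi) track the paper's proof closely and are fine: (i) and (ii) are the same computations, and (iv)--(vi) are the same bootstrap through Lemma \ref{co: beattie} and iteration of (iii). The problem is (iii) itself, which you correctly single out as the crux but then leave unproven: you write that ``controlling this cancellation is where the real work lies'' and gesture at a reorganization of correction terms ``via the cocycle identity for $\gamma_i$ itself.'' That is a gap, and moreover it points in a harder direction than necessary.

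The missing idea is a one-line observation. If $\gamma_i\neq\varepsilon$ then $a_i\neq 0$, so $\chi_i^{r_i}=\varepsilon$, and by Lemma \ref{re: chi1chi2varepsilon}(i) the grouplike $g_i^{r_i}$ commutes with every $x_j$. By your own (ii), the multiplications of $A$ and $A^{\gamma_i}$ differ only in that certain products acquire an extra summand which is a scalar multiple of $1-g_i^{r_i}$ times a basis monomial; because $1-g_i^{r_i}$ commutes with all the $x_j$, each such correction can be rewritten with the factor $1-g_i^{r_i}$ placed in the $H$-slot adjacent to the relevant tensorand. Then for any $H$-bilinear (hence $H$-balanced) $\gamma$, every correction term is annihilated term by term, since $\varepsilon(1-g_i^{r_i})=0$. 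Consequently condition (\ref{form: cocycle1 Hopf}) for $\gamma$ evaluated with $m_{A^{\gamma_i}}$ is literally the same system of equations as with $m_A$, and (iii) follows with no appeal to the cocycle identity for $\gamma_i$ and no cancellation across summands. Your detour through Lemma \ref{co: beattie} to recast (iii) as stability of $Z^2_H(A,K)$ under convolution by $\gamma_i^{\pm 1}$ is logically sound but unnecessary; the paper proves (iii) directly by the argument above and only invokes Lemma \ref{co: beattie} afterwards, in (iv).
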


\begin{proof}
i) By Proposition \ref{pr: gamma(ai)}, $\gamma_{i}\ast\beta_{j}=\beta_{j}%
\ast\gamma_{i}$ for all $i\neq j$. It remains to check that $\gamma_{i}%
\ast\beta_{i}=\beta_{i}\ast\gamma_{i}$. But it is only necessary to check
these maps on an element of the form $x_{i}^{m}\otimes x_{i}^{n}$ and note
that $\gamma_{i}\ast\beta_{i}(x_{i}^{m}\otimes x_{i}^{n})=\delta_{r_{i}%
,n+m}(a_{i}+b_{i})=\beta_{i}\ast\gamma_{i}(x_{i}^{m}\otimes x_{i}^{n})$. In
particular, we get $\gamma_{a_{i}}^{-1}=\gamma_{(-a_{i})}$.

ii) As in \cite{ABM}, the multiplication $m_{i}:A^{\gamma_{i}}\otimes
A^{\gamma_{i}}\rightarrow A^{\gamma_{i}}$ changes the product of $x_{i}^{m}$
and $x_{i}^{r_{i}-m}$ for $0<m<r_{i}$, namely
\[
m_{i}(x_{i}^{m}\otimes x_{i}^{r_{i}-m})=\gamma_{i}(x_{i}^{m}\otimes
x_{i}^{r_{i}-m})+x_{i}^{r_{i}}+g_{i}^{r_{i}}\gamma_{i}^{-1}(x_{i}^{m}\otimes
x_{i}^{r_{i}-m})=a_{i}(1-g_{i}^{r_{i}}).
\]
but all other products of elements of the form $x_{i}^{t}$ or
$x_{j}^{s}$ are unchanged.   Thus $A^{\gamma_{i}}\cong B$.  This
 also follows from Proposition
\ref{pro:gamma}.

iii) If $\gamma_{i}\neq\varepsilon$ then $a_{i}\neq0$ so that, by Lemma
\ref{re: chi1chi2varepsilon} part i), $1-g_{i}^{r_{i}}$ commutes with all
$x_{j}$. Let $\gamma:A\otimes A\rightarrow K$. Then by the $H$-bilinearity
condition on cocycles and the fact that $\varepsilon(1-g_{i}^{r_{i}})=0$,
$\gamma\in Z_{H}^{2}(A,K)$ if and only if $\gamma\in Z_{H}^{2}(A^{\gamma_{i}%
},K)$.

iv) Part iii) together with Lemma \ref{co: beattie} implies that for all
$\gamma\in Z_{H}^{2}(A,K)$ one has $\gamma\ast\gamma_{i}\in Z_{H}^{2}(A,K)$
for all $i$. Then $\gamma_{1}\in Z_{H}^{2}(A,K)$ implies $\gamma_{1}\ast
\gamma_{2}\in Z_{H}^{2}(A,K)$ which implies $\left(  \gamma_{1}\ast\gamma
_{2}\right)  \ast\gamma_{3}\in Z_{H}^{2}(A,K)$. Extending this argument, we
obtain $\gamma_{1}\ast\ldots\ast\gamma_{\theta}\in Z_{H}^{2}(A,K)$.

v) From iii), $\gamma\in Z_{H}^{2}(A,K)\Longleftrightarrow\gamma\in Z_{H}%
^{2}(A^{\gamma_{i}},K)$. But then, denoting $B:=A^{\gamma_{i}}$, again using
iii), $\gamma\in Z_{H}^{2}(B,K)\Longleftrightarrow\gamma\in Z_{H}%
^{2}(B^{\gamma_{j}},K)$. Continuing in this way, we obtain that $\gamma\in
Z_{H}^{2}(A,K)$ if and only if $\gamma\in Z_{H}^{2}(A^{\gamma_{1}\ast
\ldots\ast\gamma_{\theta}},K)$.

vi)  This follows by an argument similar to that in ii),
or also follows   from Proposition \ref{pro:gamma}.
\end{proof}

\vspace{1mm}

\begin{proposition}
\label{pr: gamma a} Let $A=R\#H$
with $R , H$ as above.
  Let $i<j$, let $a_{ij}$ be a scalar and assume that
$\chi_{i}\chi_{j}=\varepsilon$ if $a_{ij}\neq0$. Define the
$H$-bilinear map $\gamma_{ij}:=\gamma_{a_{ij}}$ from $A\otimes A$ to
$K$ as follows: $\gamma_{ij}=\varepsilon$ except that  for $0 \leq m
< r_i$,
\[
\gamma_{ij}(x_{j}^{m}\otimes x_{i}^{m})=(m)!_{q}a_{ij}^{m},
\]
and $\gamma_{ij}$ is then extended to all of $A\otimes A$ by $H$-bilinearity.
Then $\gamma_{ij}\in Z_{H}^{2}(A,K)$.
\end{proposition}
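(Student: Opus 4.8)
The plan is to follow the template of the proof of Proposition \ref{pr: gamma(ai)}. If $a_{ij}=0$ then $\gamma_{ij}=\varepsilon$ and there is nothing to prove, so assume $a_{ij}\neq0$; then $\chi_i\chi_j=\varepsilon$, Lemma \ref{re: chi1chi2varepsilon}(ii) applies, and I set $q:=\chi_i(g_i)$, so that $\chi_i(g_j)=q$, $\chi_j(g_i)=\chi_j(g_j)=q^{-1}$, and $r_i=r_j$. Since $A=\mathcal{B}(W)\#K[\Gamma]$ is the \emph{graded} biproduct, the relations are the clean ones $x_ix_j=q^{-1}x_jx_i$ and $x_k^{r_k}=0$, with no lifting corrections. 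The map $\gamma_{ij}$ is $H$-bilinear by construction, and it is $H$-balanced precisely because $\chi_i\chi_j=\varepsilon$ forces $(\chi_i\chi_j)^m=\varepsilon$, exactly as the balancedness of the $\gamma_i$ in Proposition \ref{pr: gamma(ai)} came from $\chi_i^{r_i}=\varepsilon$; the normalization (\ref{form: cocycle3}) is immediate from the definition. For convolution invertibility I would set $\delta:=\gamma_{ij}-\varepsilon_{A\otimes A}$, which is supported on elements $g\,x_j^m\otimes g'x_i^m$ with $m\geq1$; since the total $x$-multidegree is additive along the coproduct, $\delta^{\ast k}$ is supported in $x_j$-degree $\geq k$, and as $x_j^{r_j}=0$ one gets $\delta^{\ast r_j}=0$, so $\gamma_{ij}=\varepsilon_{A\otimes A}+\delta$ is invertible with inverse $\sum_{k\geq0}(-1)^k\delta^{\ast k}$.

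It remains to verify (\ref{form: cocycle1 Hopf}). By Lemma \ref{rm: triple} it suffices to test it on triples $X,Y,Z$ of monomials $x_1^{N_1}\cdots x_\theta^{N_\theta}$. Here I would exploit the $\mathbb{N}^{\theta}$-grading of $A$ by multidegree in the $x_k$'s: the relations $x_ix_j=q^{-1}x_jx_i$ and $x_k^{r_k}=0$ are homogeneous and $\Delta(x_k)=x_k\otimes1+g_k\otimes x_k$ preserves total multidegree, so $A$ is a graded Hopf algebra, while $\gamma_{ij}(P\otimes Q)\neq0$ forces $P$ to be a grouplike times $x_j^m$ and $Q$ a grouplike times $x_i^m$. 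Tracking multidegrees through both sides of (\ref{form: cocycle1 Hopf}) then shows that each side vanishes unless $X=x_j^{b}$, $Z=x_i^{c}$, and $Y=x_i^{N_i}x_j^{N_j}$, so it is enough to treat this configuration.

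For such $X,Y,Z$ I would expand both sides using (\ref{form: Delta}) (with parameter $q$ for $x_i$ and $q^{-1}$ for $x_j$). On each side the pure-power requirement on the arguments of $\gamma_{ij}$ selects a \emph{single} coproduct term: on the left, the term of $\Delta(Y)$ that sends all $x_j$'s to the first tensorand and all $x_i$'s to the second, together with the term $c_1=N_j$ of $\Delta(Z)$; on the right, the mirror-image term of $\Delta(Y)$ together with $b_1=N_i$ of $\Delta(X)$. Both selections force the same constraint $b=N_i+c-N_j$, so the powers of $a_{ij}$ agree ($a_{ij}^{N_j+b}=a_{ij}^{N_i+c}$), and after stripping grouplikes the identity to be proved is $(N_j)!_{q}\binom{c}{N_j}_{q}(b)!_{q}=q^{(b-N_i)N_i}(N_i)!_{q}\binom{b}{N_i}_{q^{-1}}(c)!_{q}$. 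Writing $d:=c-N_j=b-N_i$, both sides reduce to $(b)!_q(c)!_q/(d)!_q$ once one uses $(n)!_{q^{-1}}=q^{-\binom{n}{2}}(n)!_q$ together with the elementary cancellation $\binom{N_i}{2}+\binom{d}{2}-\binom{N_i+d}{2}=-N_id$, which exactly absorbs the prefactor $q^{(b-N_i)N_i}=q^{dN_i}$.

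The main obstacle is this last step: the bookkeeping of the powers of $q$. These exponents arise from commuting the grouplikes $g_i^{\,\cdot},g_j^{\,\cdot}$ produced by the coproducts past the variables $x_i,x_j$ (each move contributing $\chi_i(g_j)=q$ or $\chi_j(g_i)=q^{-1}$), and they must be reconciled with the intrinsic asymmetry between the $q$- and $q^{-1}$-binomial coefficients occurring on the two sides. Arranging for these exponents to cancel is exactly where the hypothesis $\chi_i\chi_j=\varepsilon$, and hence the precise values recorded in Lemma \ref{re: chi1chi2varepsilon}(ii) (including $r_i=r_j$), is genuinely used.
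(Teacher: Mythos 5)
Your proof is correct and follows essentially the same route as the paper's: reduce by $H$-bilinearity, $H$-balancedness and Lemma \ref{rm: triple} to monomial triples supported on $x_i,x_j$ alone, then verify the cocycle identity by a $q$-binomial computation --- the paper simply outsources that computation to \cite[Proposition 5.6]{ABM}, whereas you carry it out, and your final identity (including the prefactor $q^{N_i(b-N_i)}$, the constraint $b=N_i+c-N_j$, and the reduction of both sides to $(b)!_q(c)!_q/(d)!_q$) checks out. Your explicit nilpotence argument for the convolution invertibility of $\gamma_{ij}$ is a detail the paper leaves implicit.
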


\begin{proof}
If $a_{i j}=0$, then $\gamma_{ij}=\varepsilon$. Let $a_{ij}$ be
nonzero so that $\chi_{i}\chi_{j}=\varepsilon$. It is easy to check
that the condition $\chi_{i}=\chi_{j}^{-1}$ ensures that
$\gamma_{ij}$ is $H$-balanced. By the definition of $\gamma_{ij}$
the cocycle conditions need
only be checked on triples of the form $x_{i}^{n_{1}}x_{j}^{m_{1}}%
,x_{i}^{n_{2}}x_{j}^{m_{2}},x_{i}^{n_{3}}x_{j}^{m_{3}}$. The proof is now the
same as the proof of \cite[Proposition 5.6]{ABM} with $q=\chi_{i}(g_{i})$,
with $i$ replacing $1$ and $j$ replacing $2$.
\end{proof}

Note that $\gamma_{ij}^{-1}(x_{j} \otimes x_{i}) = -a_{ij}$; the argument is
in \cite{ABM}.

By Lemma \ref{rm: liftings}, $\gamma_{ij} \ast\gamma_{1} \ast\ldots\ast
\gamma_{\theta}\in Z^{2}_{H}(A,K)$. However the next example shows that
$\gamma_{1} \ast\ldots\ast\gamma_{\theta}\ast\gamma_{ij}$ may not lie in
$Z^{2}_{H}(A,K)$.

\begin{example}
\label{ex: noncommuting cocycles} Let $V$ be a quantum linear space, i.e.
$\theta= 2$, and let $a :=a_{12} \neq0$ and $\chi_{1} \chi_{2} = \varepsilon$.
The same example used to show that $\gamma_{12}$ and $\gamma_{1}$ do not
commute in \cite[Section 5]{ABM} shows that $\gamma_{1} \ast\gamma_{12} \notin
Z^{2}_{H}(A,K)$. We test $\gamma_{1} \ast\gamma_{12}$ on the triple
$x_{1}^{r-1}, x_{2}, x_{1}^{2}$ where $r = r_{1} = r_{2}$ since $\chi_{1}
\chi_{2} = \varepsilon$. Then the left hand side of (\ref{form: cocycle1 Hopf}%
) is
$$
  \left[ (\gamma_{1} \ast\gamma_{12}) (x_{2} \otimes (q+1)x_{1})\right]\left[(\gamma_{1} \ast
\gamma_{12})(x_{1}^{r-1} \otimes x_{1} )\right] = (q+1)aa_{1}.
$$

But the right hand side is
\[
  \left[(\gamma_{1} \ast\gamma_{12})(g_{1}^{r-1} \otimes g_{2})\right]\left[ (\gamma_{1} \ast
\gamma_{12})(x_{1}^{r-1}x_{2} \otimes x_{1}^{2})\right] = q(q+1)aa_{1}.
\]
\qed

\end{example}

Note that if a lifting of a quantum linear space has scalars $a_{ij} \neq0 $
and $a_{ik} \neq0$ (or $a_{ki} \neq0$) then $r_{i} = r_{j} = r_{k} = 2$. For
example, if $a_{12},a_{13}$ are both nonzero, then we must have that
$\varepsilon= \chi_{1}\chi_{2} = \chi_{1}\chi_{3}$ so that $\chi_{2} =
\chi_{3} = \chi_{1}^{-1}$. Then $1 = \chi_{3}(g_{2})\chi_{2}(g_{3}) = \chi
_{2}(g_{2})\chi_{3}(g_{3}) = q_{2}q_{3}$. By Lemma
\ref{re: chi1chi2varepsilon}(ii), $q_{2} = q_{1}^{-1} = q_{3}$. Thus $1 =
q_{2}^{2}$ so $q_{1} = q_{2} = q_{3} = -1$.\newline

\par We can now describe cocycles that twist $\mathcal{B}(W) \#
K[\Gamma]$ to a lifting of this Radford biproduct.

\par Let $B$ be the lifting of $A:= \mathcal{B}(W) \#
K[\Gamma]$ defined by a set of nonzero scalars $a_i$, $a_{ij}$.
Consider $1, \ldots, \theta$ as vertices and construct a nondirected
graph by drawing an edge between $i$ and $j$ if $a_{ij} \neq0$. Let
$C_{i}$ be the set of vertices connected to $i$ by some path. Then
$\{1, \ldots, \theta\} $ is the disjoint union of the connected
components $C_{\tau }$, $\tau\in T$.

Let $\gamma_{\tau}$ be a cocycle such that $\gamma_{\tau}$ is
$\varepsilon$ on $X=x_{1}^{n_{1}}\ldots
x_{\theta}^{n_{\theta}}\otimes x_{1}^{N_{1}}\ldots
x_{\theta}^{N_{\theta}}$ unless $n_{i}=N_{i}=0$ for $i\notin
C_{\tau}$. Then we say that $\gamma_{\tau}$ is a\emph{ cocycle for
the connected component} $C_{\tau}$. The next proposition, together
with Proposition \ref{pr: gamma(ai)}, shows that cocycles of the
form $\gamma _{i},\gamma_{ij}$  belonging to different connected
components commute. Also if $|C_{\tau}|>2$ so that all $r_{i}$ are
$2$, then the cocycles of the types $\gamma_{i},\gamma_{ij}$ for
$C_{\tau}$ also pairwise commute. We noted in Example \ref{ex:
noncommuting cocycles} that this is not true in general.

\begin{proposition}
\label{pr: cocycles commute} For
W,R,H as throughout the paper, let $ A = R \#
H $. For some $i<j$, let $a_{ij}$
be a nonzero scalar and suppose that $\chi_{i}\chi_{j} =
\varepsilon$ and $g_{i}g_{j} \neq1 $.   Assume $r_{i} = r_{j}$.

i) For an integer $k$ let $a_{k}\neq 0$   and suppose that
$\chi_{k}^{r_{k}}= \varepsilon$. Assume that either $k \notin\{i,j
\}$ or $r_{i}=r_{j}=2$. Then
\[
\gamma_{ij} \ast\gamma_{k} = \gamma_{k} \ast\gamma_{ij}.
\]

ii) For integers $k < m$ let $a_{km} \neq 0$   and suppose that
$\chi_{k}\chi_{m} = \varepsilon$ and $r_{k} = r_{m}$. Assume that
either $k,m \notin\{i,j \}$ or $r_{i}=r_{j}=2$. Then
\[
\gamma_{ij} \ast\gamma_{km} = \gamma_{km} \ast\gamma_{ij}.
\]


\end{proposition}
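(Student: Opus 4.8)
The plan is to reduce everything to an evaluation on monomials and then to a character computation entirely analogous to the proof that $\gamma_1$ and $\gamma_2$ commute in Proposition \ref{pr: gamma(ai)}. First I would observe that, since $H=K[\Gamma]$ is spanned by grouplikes, the convolution of two $H$-bilinear maps is again $H$-bilinear: for grouplike $h,h'$ one has $\Delta(ha)=ha_{(1)}\otimes ha_{(2)}$ and $\Delta(bh')=b_{(1)}h'\otimes b_{(2)}h'$, so the grouplike factors pass through both tensor slots of each convolution factor and are absorbed by $\varepsilon$. Hence $\gamma_{ij}\ast\gamma_k$ and $\gamma_k\ast\gamma_{ij}$ (and likewise $\gamma_{ij}\ast\gamma_{km}$ and $\gamma_{km}\ast\gamma_{ij}$) are all $H$-bilinear, and two $H$-bilinear maps coincide as soon as they agree on the monomial basis elements $X\otimes Y$ with $X=x_1^{n_1}\cdots x_\theta^{n_\theta}$ and $Y=x_1^{m_1}\cdots x_\theta^{m_\theta}$; this is the same reduction that underlies Lemma \ref{rm: triple}.

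Next I would pin down the support. Expanding $\Delta X$ and $\Delta Y$ by (\ref{form: Delta}) and using the definitions of the two cocycles, a term of $\gamma_{ij}\ast\gamma_k(X\otimes Y)$ survives nontrivially only if $\gamma_{ij}$ is fed a pure $x_j$-power against a pure $x_i$-power (times grouplikes) while $\gamma_k$ is fed $x_k$-powers (times grouplikes) whose degrees sum to $r_k$; symmetrically for $\gamma_k\ast\gamma_{ij}$. In the case $k\notin\{i,j\}$ this forces $X$ to be a monomial in $x_j,x_k$ only and $Y$ a monomial in $x_i,x_k$ only, so it suffices to evaluate both orders there. I would then compute each convolution explicitly, moving grouplikes through by $H$-balancedness exactly as in Proposition \ref{pr: gamma(ai)}; the two orders differ only by a power of a character value $\chi_k(g_j)$ whose exponent is a multiple of $r_k$, because the surviving $x_k$-degrees sum to $r_k$. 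Since $\chi_k^{r_k}=\varepsilon$ gives $\chi_k(g_j)^{r_k}=1$, the discrepancy vanishes and the two orders agree. Part (ii) with $\{k,m\}\cap\{i,j\}=\varnothing$ is the same argument in the four distinct variables $x_i,x_j,x_k,x_m$, the leftover factor now being controlled by $\chi_i\chi_j=\varepsilon$, $\chi_k\chi_m=\varepsilon$ and (QLS I).

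The main obstacle is the overlapping case, namely $k\in\{i,j\}$ in (i) and the analogous index collision in (ii), which is exactly where the hypothesis $r_i=r_j=2$ enters. When $x_k$ coincides with $x_i$ or $x_j$ the support analysis no longer separates the two cocycles: a surviving term can carry $x_i^{r_i}$, which in the lifting equals $a_i(1-g_i^{r_i})$, so reducing the products to standard monomial form reintroduces the grouplike $g_i^{r_i}$, which must then be commuted back past the remaining $x$'s. Here the leftover scalar is a power of $q=\chi_i(g_i)$ rather than of $\chi_k(g_j)$, and it cancels precisely when $q^2=1$; by Lemma \ref{re: chi1chi2varepsilon}(ii)(c) this is the statement that $x_i$ commutes with $g_ig_j$, which holds exactly because $r_i=r_j=2$ forces $q=-1$. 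I would therefore treat this collision case separately, verifying by direct evaluation on the relevant monomials that $q=-1$ equates the two orders, and I expect the bookkeeping of the $q$-factorials $(m)!_q$ together with the $g_i^{r_i}$ terms to be the most delicate part of the argument.
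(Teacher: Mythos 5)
Your reduction to monomials via $H$-bilinearity, your support analysis, and your treatment of the non-overlapping cases ($k\notin\{i,j\}$, resp.\ $k,m\notin\{i,j\}$) match the paper's proof: there too the two orders of convolution produce the same product of Kronecker deltas and $q$-factorials, multiplied by $\chi_j^{-lp}(g_k)$ on one side and $\chi_k(g_i)^{NP}$ on the other, and the discrepancy is a power of $\chi_k$ with exponent a multiple of $r_k$, killed by $\chi_k^{r_k}=\varepsilon$. That part of your plan is sound.

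The collision case, however, rests on a misconception. The maps $\gamma_{ij},\gamma_k$ are linear functionals on the coalgebra $A\otimes A$ with $A=\mathcal{B}(W)\#K[\Gamma]$ the \emph{untwisted} biproduct, and their convolution is $(\gamma_{ij}\ast\gamma_k)(u\otimes v)=\gamma_{ij}(u_{(1)}\otimes v_{(1)})\,\gamma_k(u_{(2)}\otimes v_{(2)})$: only the comultiplication of $A$ enters, never its multiplication. No product $x_i^{r_i}$ is ever formed, the lifting relation $x_i^{r_i}=a_i(1-g_i^{r_i})$ plays no role (indeed $x_i^{r_i}=0$ in $\mathcal{B}(W)$), and there is no grouplike $g_i^{r_i}$ to commute back past anything; consequently Lemma \ref{re: chi1chi2varepsilon}(ii)(c) is not the mechanism either. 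The actual role of $r_i=r_j=2$ is combinatorial: it bounds all exponents of $x_i,x_j$ by $1$, so after the support analysis one may take the test element to be $x_i^{n}x_j^{l}\otimes x_i$ (and its analogues), the $q$-binomial expansion of $\Delta$ has only two terms, and a short direct evaluation gives $\delta_{l,0}\delta_{n,1}a_i+\delta_{l,1}\delta_{n,0}a_{ij}$ for both orders --- which is what the paper does. Your fallback of ``direct evaluation on the relevant monomials'' would land on this computation, but the calculation you describe (rewriting products in the lifting, tracking $g_i^{r_i}$ factors, cancelling a stray power of $q$ via $q^2=1$) is not the computation that needs to be done, so as written the argument for the case $k\in\{i,j\}$ does not go through.
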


\begin{proof}
  Let $r:= r_{i} = r_{j}$ and let $q:=
\chi_{i}(g_{i})= \chi_{j}(g_{j})^{-1}$.

\par i) Clearly
$\gamma_{ij}\ast\gamma_{k}(X)=\gamma_{k}\ast\gamma_{ij}(X)=0$ unless
the exponent of any $x_{m}$ in $X\in R\otimes R$, $m\neq i,j,k$ is
$0$. Suppose that $i<j<k$ and we test equality of
$\gamma_{ij}\ast\gamma_{k}$ and
$\gamma_{k}\ast\gamma_{ij}$ on $X=x_{i}^{n}x_{j}^{l}x_{k}^{p}\otimes x_{i}%
^{N}x_{j}^{L}x_{k}^{P}$.
\begin{displaymath}
\gamma_{ij}\ast\gamma_{k}(X)   =\gamma_{ij}(x_{i}^{n}x_{j}^{l}g_{k}%
^{p}\otimes x_{i}^{N}x_{j}^{L}g_{k}^{P})\gamma_{k}(x_{k}^{p}\otimes x_{k}%
^{P})
=\chi_{j}^{-lp}(g_{k})\delta_{n,0}\delta_{L,0}\delta_{l,N}\delta
_{p+P,r_{k}}a_{k}(l)!_{q}a^{l}.
\end{displaymath}
and
\begin{displaymath}
\gamma_{k}\ast\gamma_{ij}(X)
=\gamma_{k}(g_{i}^{n}g_{j}^{l}x_{k}^{p}\otimes
g_{i}^{N}g_{j}^{L}x_{k}^{P})\gamma_{ij}(x_{i}^{n}x_{j}^{l}\otimes x_{i}%
^{N}x_{j}^{L})
  =\chi_{k}(g_{i})^{NP}\delta_{n,0}\delta_{L,0}\delta_{l,N}\delta_{p+P,r_{k}%
}a_{k}(l)!_{q}a^{l},
\end{displaymath}
and it remains to show that
$\chi_{j}^{-lp}(g_{k})=\chi_{k}(g_{i})^{NP}$. Note that    if
$\delta_{l,N}  \neq 0$ then
\begin{align*}
\chi_{j}^{-lp}(g_{k}) &  =\chi_{j}^{-Np}(g_{k}) =
 \chi_{i}^{Np}(g_{k})\mbox{  since  }\chi_{i}\chi_{j}=\varepsilon\\
&  =\chi_{k}^{-Np}(g_{i})\mbox{   since   }\chi_{i}(g_{k})\chi_{k}(g_{i})=1.
\end{align*}

But $\chi_{k}(g_{i})^{N(P+p)} = 1 $ if $p+P = r_{k}$ since
$\chi_{k}^{r_{k}} = \varepsilon$. The cases $k<i<j$ and $i<k<j$ are
similar.

\par  Now suppose that $k=i$ so that $r=2$ and $q = -1$. As above, it suffices to  test equality of $\gamma_{ij}\ast\gamma_{i}$ and
$\gamma_{i}\ast \gamma_{ij}$ on $X=x_{i}^{n}x_{j}^{l}\otimes
x_{i}^{N}x_{j}^{L}$. In fact,
since it is clear that both $\gamma_{ij}\ast\gamma_{i}$ and $\gamma_{i}%
\ast\gamma_{ij}$ are $0$ on $X$ unless $L=0$, we assume that $X=x_{i}^{n}%
x_{j}^{l}\otimes x_{i}^{N}$ and, since $N<r =2$, without loss of
generality we may assume that $N=1$. Then
\begin{align*}
\gamma_{ij}\ast\gamma_{i}(X) & =\sum_{k=0}^{1}\binom{1}{k}_{q}\gamma
_{ij}(g_{i}^{n}x_{j}^{l}\otimes x_{i}^{k}g_{i}^{1-k})\gamma_{i}(x_{i}%
^{n}\otimes x_{i}^{1-k})\\
&  =\gamma_{ij}(x_{j}^{l}\otimes g_{i})\gamma_{i}(x_{i}^{n}\otimes
x_{i})+\gamma_{ij}(x_{j}^{l}\otimes x_{i})\gamma_{i}(x_{i}^{n}\otimes1)\\
&  =\delta_{l,0}\delta_{n,1}a_{i}+\delta_{l,1}\delta_{n,0}a_{ij},
\end{align*}
and a similar computation shows that
$\gamma_{i}\ast\gamma_{ij}(X)=\delta
_{l,0}\delta_{n,1}a_{i}+\delta_{l,1}\delta_{n,0}a_{ij}$. Similarly
$\gamma_{ij} \ast \gamma_j = \gamma_j \ast \gamma_{ij}$.\newline

ii)   Set $s:=r_{k} = r_{m}$ and let $q^{\prime}= \chi_{k}(g_{k})$.
First suppose that $i<j<k<m$. It suffices to check equality of these
products of cocycles on an element $X$ of the form $x_{i}^{n}x_{j}^{l}%
x_{k}^{p}x_{m}^{w} \otimes x_{i}^{N}x_{j}^{L}x_{k}^{P}x_{m}^{W}$.
\begin{align*}
\gamma_{ij} \ast\gamma_{km} (X)  &  = \gamma_{ij}(x_{i}^{n}x_{j}^{l}g_{k}%
^{p}g_{m}^{w} \otimes x_{i}^{N} x_{j}^{L} g_{k}^{P}g_{m}^{W}) \gamma_{b}(
x_{k}^{p}x_{m}^{w} \otimes x_{k}^{P} x_{m}^{W})\\
&  = \chi_{j}^{-lw} ( g_{m} ) \delta_{n+L+p+W,0} \delta_{l,N} (l)!_{q}%
a_{ij}^{l} \delta_{w,P}(w)!{q^{\prime}}a_{km}^{w},
\end{align*}
and%

\begin{align*}
\gamma_{km} \ast\gamma_{ij} (X)  &  = \gamma_{km}(g_{i}^{n}g_{j}^{l}x_{k}%
^{p}x_{m}^{w} \otimes g_{i}^{N} g_{j}^{L} x_{k}^{P}x_{m}^{W}) \gamma_{ij}(
x_{i}^{n}x_{j}^{l} \otimes x_{i}^{N} x_{j}^{L})\\
&  = \chi_{k}^{PN}(g_{i}) \delta_{n+L+p+W,0} \delta_{w,P}(w)!{q^{\prime}%
}a_{km}^{w} \delta_{l,N} (l)!_{q}a_{ij}^{l}.
\end{align*}

But, computing as in part i), $\chi_{j}^{-lw}(g_{m}) =
\chi_{i}^{lw}(g_{m}) = \chi_{m}^{-lw}(g_{i}) = \chi_{k}^{lw}(g_{i})$
and since if these expressions are nonzero then $w=P$ and $l=N$, we
are done. The  cases $i < k < j < m$ and $i < k < m < j$ are handled
similarly.

\par Suppose that $k=i$ so that $r=s=2$, and suppose that $i < j <m$. We test equality of $\gamma_{ij}
\ast\gamma_{im}$ and $\gamma_{im} \ast\gamma_{ij}$ on $X =
x_{i}^{n}x_{j}^{l}x_{m}^{p} \otimes x_{i}^{N}x_{j}^{L}x_{m}^{P}$ and
it is clear that both maps applied to $X$ give $0$ unless $n=L=P =
0$. Thus we let $X = x_{j}^{l}x_{m}^{p} \otimes x_{i}^{N} $ and then
we may assume that $N = 1$. Then
\begin{align*}
\gamma_{ij} \ast\gamma_{im} (X)  &  = \sum_{w=0}^{1}
\binom{1}{w}_{q} \gamma_{ij}( x_{j}^{l}g_{m}^{p} \otimes x_{i}^{w}
g_{i}^{1-w}) \gamma_{im}(
x_{m}^{p} \otimes x_{i}^{1-w} )\\
&  = \gamma_{ij}(x_{j}^{l}g_{m}^{p} \otimes
g_{i})\gamma_{ij}(x_{m}^{p} \otimes x_{i}) +
\gamma_{ij}(x_{j}^{l}g_{m}^{p} \otimes x_{i})\gamma
_{im}(x_{m}^{p} \otimes1)\\
&  = \delta_{l,0}\delta_{p,1}a_{im} +
\delta_{l,1}\delta_{p,0}a_{ij},
\end{align*}
and
\begin{align*}
\gamma_{im} \ast\gamma_{ij} (X)  &  = \sum_{w=0}^{1}
\binom{1}{w}_{q} \gamma_{im}( g_{j}^{l} x_{m}^{p} \otimes x_{i}^{w}
g_{i}^{1-w} ) \gamma_{ij}(
x_{j}^{l} \otimes x_{i}^{1-w} )\\
&  =\gamma_{im}( g_{j}^{l} x_{m}^{p} \otimes g_{i})\gamma_{ij}(
x_{j}^{l} \otimes x_{i} ) + \gamma_{im}( g_{j}^{l} x_{m}^{p} \otimes
x_{i} ) \gamma
_{ij}( x_{j}^{l} \otimes1 )\\
&  = \delta_{p,0} \delta_{l,1}a_{ij} +
\delta_{p,1}\delta_{l,0}a_{im}.
\end{align*}
The remaining cases $i<m<j$, etc., have similar proofs.
\end{proof}

Now suppose $B$ is a lifting of $A :=  \mathcal{B}(W)\#K[\Gamma]$
with nonzero scalars $a_i, a_{ij}$. Then by Proposition \ref{pr:
lift}, we must have that if $a_i \neq 0$, $g_i^{r_i} \neq 1$ as well
as $\chi_i^{r_i} = \varepsilon$ and if $a_{ij} \neq 0$ then $g_ig_j
\neq 1$ as well as $\chi_i\chi_j = \varepsilon$. We construct a
cocycle that will twist $A$ to $B$ from the cocycles for the
connected components.

\begin{theorem}
\label{thm: connected}For each connected component $C_{\tau}$ define
$\alpha_{\tau}\in Z_{H}^{2}(A,K)$ by the following:

\begin{itemize}
\item If $|C_{\tau}| = 1$, i.e., $C_{\tau}= \{ i\}$ for some $i$, then define
$\alpha_{\tau}= \gamma_{i}$;

\item If $|C_{\tau}| = 2$, i.e., $C_{\tau}= \{ i,j \}$ for some $i<j$, then
define $\alpha_{\tau}= \gamma_{ij}\ast\gamma_{i}\ast\gamma_{j}$.

\item If $|C_{\tau}| > 2$, i.e., $C_{\tau}= \{ i_{1},i_{2}, \ldots, i_{n} \}$
with $i_{n}<i_{n+1}$, define $\alpha_{\tau}=( \prod_{i<j} \gamma_{ij} )
\ast(\prod_{k=1}^{n} \gamma_{i_{k}})$.
\end{itemize}

Let $\alpha= \prod_{\tau\in T}\alpha_{\tau}$. Then $\alpha$ is a
cocycle and the cocycle twist $A^\alpha$ is isomorphic to the
lifting $B$ with scalars $a_{i}, a_{ij}$ as above.
\end{theorem}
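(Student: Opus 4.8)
The plan is to realise $\alpha$ as an iterated cocycle twist built from the basic cocycles $\gamma_i$ and $\gamma_{ij}$, and to propagate both assertions one factor at a time using the identity $(A^{\beta})^{\gamma}=A^{\gamma\ast\beta}$ together with Beattie's Lemma~\ref{co: beattie}. First I would rewrite $\alpha$ in a convenient form. Since cocycles attached to distinct connected components have disjoint index sets, Proposition~\ref{pr: gamma(ai)} and Proposition~\ref{pr: cocycles commute} show that the $\alpha_{\tau}$ pairwise commute, and that a vertex cocycle $\gamma_k$ of one component commutes with an edge cocycle $\gamma_{ij}$ of another. Hence I can regroup $\alpha=\delta\ast\nu$, where $\delta=\prod_{\text{edges}}\gamma_{ij}$ and $\nu=\gamma_{1}\ast\cdots\ast\gamma_{\theta}$; the internal order in $\delta$ is immaterial because edge cocycles inside a component of size $>2$ all have $r_i=r_j=2$ and commute by Proposition~\ref{pr: cocycles commute}(ii), while edges in different components commute trivially.

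For the cocycle property I would argue in three stages. By Lemma~\ref{rm: liftings}(iv), $\nu\in Z_H^2(A,K)$. Next I would show $\delta\in Z_H^2(A,K)$ by appending one edge cocycle at a time on the right via Beattie's Lemma: if $\gamma\in Z_H^2(A,K)$ then $\gamma\ast\gamma_{ij}\in Z_H^2(A,K)$ provided $\gamma\in Z_H^2(A^{\gamma_{ij}},K)$. This needs the analogue of Lemma~\ref{rm: liftings}(iii) for an edge cocycle, that passing from $A$ to $A^{\gamma_{ij}}$ preserves the set of $H$-bilinear cocycles. Since $A^{\gamma_{ij}}$ differs from $A$ only by the term $a_{ij}(1-g_ig_j)$ in the relation $x_ix_j=\chi_j(g_i)x_jx_i+a_{ij}(1-g_ig_j)$, the argument of Lemma~\ref{rm: liftings}(iii) carries over verbatim whenever $1-g_ig_j$ is central, i.e. whenever $r_i=r_j=2$ by Lemma~\ref{re: chi1chi2varepsilon}(ii)(c); this handles all edges inside a component of size $>2$. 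For an edge of a size-$2$ component (where $r_i$ may exceed $2$) I would instead note that the partial product $\gamma$ accumulated so far is supported away from $i,j$, so by Lemma~\ref{rm: triple} only triples of monomials with vanishing $i,j$-exponents contribute; for these the products in the cocycle condition never invoke the altered $x_ix_j$-relation, and the condition is the same in $A$ and $A^{\gamma_{ij}}$. Finally, since $\delta\in Z_H^2(A,K)=Z_H^2(A^{\nu},K)$ by Lemma~\ref{rm: liftings}(v) and $\nu\in Z_H^2(A,K)$, Beattie's Lemma yields $\alpha=\delta\ast\nu\in Z_H^2(A,K)$; $H$-bilinearity persists because each factor is $H$-bilinear and the grouplikes are absorbed by $\varepsilon$ as in the earlier lemmas.

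For the isomorphism I would use $A^{\alpha}=A^{\delta\ast\nu}=(A^{\nu})^{\delta}$. By Lemma~\ref{rm: liftings}(vi), $A^{\nu}$ is isomorphic to the lifting $B_0$ carrying the relations $x_i^{r_i}=a_i(1-g_i^{r_i})$ and $x_ix_j=\chi_j(g_i)x_jx_i$, with all $a_{ij}=0$. Twisting $B_0$ further by each $\gamma_{ij}$ installs exactly the missing term $a_{ij}(1-g_ig_j)$ while leaving every other relation fixed, by the explicit product computation of Lemma~\ref{rm: liftings}(ii) and \cite[Section~5]{ABM} (here $m<r_i=r_j$ throughout, so the $x_i^{r_i}$-relation is never triggered). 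Iterating over all edges, $(A^{\nu})^{\delta}$ acquires precisely the defining relations of $B$, whence $A^{\alpha}\cong B$.

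The main obstacle is the cocycle property of the edge block $\delta$, together with the ordering of the factors. Example~\ref{ex: noncommuting cocycles} shows that $\gamma_i\ast\gamma_{ij}$ need not be a cocycle, so the edge cocycles must sit to the left of the vertex cocycles, exactly as in the definition of $\alpha_{\tau}$, and Lemma~\ref{rm: liftings}(iii) has no literal analogue for $\gamma_{ij}$ once $r_i>2$. The crux is therefore to supply the two substitutes described above (centrality of $1-g_ig_j$ when $r=2$, and the support/disjointness argument through Lemma~\ref{rm: triple} otherwise) and to verify that the required hypothesis holds at every stage of the inductive build-up dictated by the connected-component structure.
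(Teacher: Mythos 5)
Your argument is correct and follows essentially the same route as the paper's: both rest on Beattie's Lemma \ref{co: beattie} combined with the invariance of $Z^2_H(\,\cdot\,,K)$ under twisting by the factor cocycles (justified by the centrality of $1-g_i^{r_i}$ and of $1-g_ig_j$ when $r=2$, and by disjointness of supports across components otherwise) together with the commutation results of Propositions \ref{pr: gamma(ai)} and \ref{pr: cocycles commute}, so your regrouping of $\alpha$ as $\delta\ast\nu$ is only a cosmetic reordering of the paper's component-by-component induction. For the isomorphism $A^{\alpha}\cong B$ you take the direct-computation route that the paper attributes to \cite[Proposition 5.9]{ABM}, whereas the paper also offers the alternative of verifying the value conditions of Proposition \ref{pro:gamma} via Proposition \ref{pro: useful3} in the appendix.
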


\begin{proof}
First we show that $\alpha_{\tau}\in Z^{2}_{H}(A,K)$ when $C_{\tau}$
has more than $2$ elements. In this case $r_{i} = 2$ and $q_{i} =
-1$ for every $i \in C_{\tau}$ so that for all $i,j $ with $a_{ij}
\neq0$, $1-g_{i}g_{j} $ is in the centre of $A$. Then by the same
argument as in Lemma \ref{rm:
liftings} (iii), $\gamma\in Z^{2}_{H}(A,K)$ if and only if
$\gamma\in Z^{2}_{H}
(A^{\gamma_{ij}},K)$. From Lemma \ref{rm: liftings}(iv), $\gamma\in Z^{2}%
_{H}(A,K)$ if and only if $\gamma\in Z^{2}_{H} (A^{\gamma_{i }},K)$
and so the fact from Proposition \ref{pr: cocycles commute} that the
$\gamma_{ij}$ and $\gamma_{k}$ commute for any $i,j,k \in C_{\tau}$
implies that $\alpha_{\tau }\in Z^{2}_{H}(A,K)$.

\par Similarly for $\tau\neq\omega$, from the fact that
$1-g_{i}^{r_{i}}$ commutes with all $x_{j}$ and $1-g_{i}g_{j}$
commutes with all $x_{k}$ for $k \neq i,j$, then by the argument in
Lemma \ref{rm: liftings},
$\alpha_{\tau}\in Z^{2}_{H}(A,K)$ if and only if $\alpha_{\tau}\in
Z^{2}_{H}(A^{\omega},K)$. Thus $\alpha\in Z^{2}_{H}(A,K)$.

\par The last statement follows as in  \cite[Proposition 5.9]{ABM},
or by  Proposition \ref{pro: useful3}.
\end{proof}

\begin{remark}
The twisting cocycle $\alpha$ constructed above is not unique.
Cocycles $\gamma$ such that $A^{\gamma}= A$ are discussed in
\cite{Chen} where it is noted that such cocycles form a group. In
\cite{BichonCarnovale} these are called  \textit{lazy cocycles}. So
we have that the twisting cocycle is unique only up to
multiplication by a lazy cocycle,
 that is, $A^\alpha=A^\beta$ if and only if $\beta^{-1}\circ\alpha$ is
a lazy cocycle.

\par Note that in the definition of $\gamma_{i}$ (respectively
$\gamma_{ij}$) it is not required that  $g_{i}^{r_{i}}\neq1$
($g_{i}g_{j}\neq1$). However, if $g_i^{r_i} = 1$ (respectively
$g_ig_j = 1$)  then the cocycle is a lazy one; twisting by such a
cocycle gives the trivial lifting.\end{remark}

\vspace{1mm}

\section{When is the twisting cocycle of the form $\lambda\circ\xi$?}

Recall the motivating question.  For any Hopf algebra $A \cong R
\#_\xi H$, is $\gamma_R^{-1} = \Lambda \circ \xi$   a cocycle for
$R$?  If so, then
 twisting $A$ by this cocycle, extended to $A$, gives a Radford biproduct, as desired.

Let $ R\#_{\xi}H$ as before be a lifting of a quantum plane. In this
section, we present our main results; we  give sufficient conditions
for the composition of the cocycle $\xi$ for the pre-bialgebra and
the total integral $\lambda$ on $H$ to be a cocycle which twists the
Radford biproduct to the given lifting in the setting of
$H=K[\Gamma]$ and $V$ a quantum plane. Recall that the total
integral $\lambda$ on $K[\Gamma]$ is given on generators by
$\lambda(g)=\delta_{g,e},$ the Kronecker delta, where $g\in\Gamma$
and $e$ denotes the unit of $\Gamma$.  The following, together with
the examples of dimension $32$ in \cite{ABM} show that $\lambda
\circ \xi  = \lambda\circ\pi\circ m_{A^{\alpha}}$ is not always
equal to $\alpha$, where $\alpha$ is the cocycle from the previous
section. To construct our examples below it is useful to have an
explicit form of $\alpha$ for  a quantum plane.
\begin{remark}
\label{rem: cocycle}(cf. \cite[Section 5]{ABM}) For $A =
\mathcal{B}(V) \# K[\Gamma]$, $V$ a quantum plane, we summarize the
action of the cocycle $\alpha:= \gamma_{a} \ast \gamma_{1}
\ast\gamma_{2}$ on $A \otimes A$. For $0 < i,k,m,n,t $ we have

\begin{itemize}
\item[(i)] $\alpha(z \otimes1) = \alpha(1 \otimes z) = \varepsilon(z)$ for all
$z \in A$.

\item[(ii)] $\alpha(x_{i}^{n} \otimes x_{i}^{m} ) = \delta_{n+m,r_{i}} a_{i}
.$

\item[(iii)] $\alpha(x_{1}^{m} \otimes x_{2}^{k}) = 0.$

\item[(iv)] $\alpha(x_{1}^{i} \otimes x_{1}^{k} x_{2}^{m}) = 0 = \alpha
(x_{1}^{i}x_{2}^{k} \otimes x_{2}^{m}).$

\item[(v)] $\alpha(x_{2}^{m} \otimes x_{1}^{n}) = \delta_{n,m} (m)!_{q}a^{m}.$

\item[(vi)] $\alpha(x_{2}^{i} \otimes x_{1}^{k}x_{2}^{m}) = \delta
_{i+m,r_{2}+k}\binom{i}{k}_{q}(k)!_{q}a^{k}a_{2} $.

\item[(vii)] $\alpha(x_{1}^{i}x_{2}^{k} \otimes x_{1}^{m}) = \delta
_{i+m,r_{1}+k}\binom{m}{k}_{q}(k)!_{q}a^{k}a_{1} $.

\item[(viii)] $\alpha(x_{1}^{i}x_{2}^{k} \otimes x_{1}^{m} x_{2}^{t}) =
\delta_{i+m,k+t}(i+m-r)!_{q}\binom{k}{r-t}_{q} \binom{m}{r-i}_{q}
q^{it}a^{i+m-r}a_{1}a_{2} $ where $a \neq0$, $r=r_{1}=r_{2}$.

\item[(ix)] $\alpha(x_{1}^{i}x_{2}^{k} \otimes x_{1}^{m} x_{2}^{t}) =
\delta_{i+m,r_{1}}\delta_{k+t,r_{2}} \chi_{1}^{it}(g_{2}) a_{1}a_{2}
$ for $a =0$.
\end{itemize}
\end{remark}

\begin{example}
\label{ex: cocycle not lambda} Let $r,s$ be integers greater than $1$ and let
$\Gamma= C_{rs} \times C_{r} = \langle g \rangle\times\langle h \rangle$ be
the product of two cyclic groups. Let $V = Kx_{1} \oplus Kx_{2}$ be a quantum
plane where $x_{1} \in V_{g}^{\chi} $ and $x_{2} \in V_{g^{-1}h}^{\chi^{-1}}$
with $\chi(g) = q$, $q $ a primitive $r$th root of $1$ and $\chi(h) = q^{2}$.

We verify that $V$ is a quantum plane and that the conditions
(\ref{eq: condnainonzero}) and (\ref{eq: condaijnonzero}) needed to form a
lifting with nonzero scalars $a_{1},a_{2},a$ hold.

\begin{itemize}
\item $\chi(g^{-1}h)\chi^{-1}(g) = q^{-1}q^{2}q^{-1} = 1$;

\item $\chi(g) = q$ and $\chi^{-1}(g^{-1}h) = q q^{-2} = q^{-1}$ with both
$q,q^{-1}$ primitive $r$th roots of unity;

\item $g^{r} \neq1$ and $g^{-r}h^{r} = g^{-r} \neq1$;

\item $g_{1}g_{2} = g g^{-1}h = h \neq1$;

\item $\chi^{r} = \varepsilon$ and by definition $\chi_{1}\chi_{2} =
\varepsilon$.
\end{itemize}

Now let $B$ be a lifting of $A=\mathcal{B}(V)\#K[\Gamma]$ with nonzero scalars
$a_{1},a_{2}$ and with $a$ arbitrary. We know $B=A^{\alpha}$ where the cocycle
$\alpha\in Z_{K[\Gamma]}^{2}(A,K)$ is given in Remark \ref{rem: cocycle}. Thus
$\alpha(x_{1}x_{2}\otimes x_{1}^{r-1}x_{2}^{r-1})=q^{-1}a_{1}a_{2}$.

Let $\lambda$ denote the total integral from the group algebra
$K[\Gamma]$ to $K$. Then
\[
\lambda\pi m_{A^{\alpha}}(x_{1}x_{2}\otimes
x_{1}^{r-1}x_{2}^{r-1})=\lambda \pi
\left[x_{1}\left(qx_{1}x_{2}+a(1-g_{1}g_{2})\right)
x_{1}^{r-2}x_{2}^{r-1}\right].
\]
Continuing in this way, since $\pi(x_{1}^{i}x_{2}^{i})=0$ for $0<i<r$, we
obtain
\begin{align*}
\lambda\pi(q^{r-1}x_{1}^{r}x_{2}^{r})  &  =\lambda\pi \left[ q^{-1}a_{1}%
a_{2}(1-g^{r})(1-g^{-r}h^{r})\right]\\
&  =q^{-1}a_{1}a_{2}\lambda\lbrack1-g^{r}-g^{-r}h^{r}+1]\\
&  =2q^{-1}a_{1}a_{2}=2\alpha(x_{1}x_{2}\otimes x_{1}^{r-1}x_{2}^{r-1}).
\end{align*}
Thus, here, $\lambda\circ\pi\circ m_{A^{\alpha}}\neq\alpha$. \qed

\end{example}

\begin{example}
Let $\Gamma=C_{8}=\langle c\rangle$ be the cyclic group of order
$8$. Let $V=Kx_{1}\oplus Kx_{2}$ be a quantum plane where $x_{1}\in
V_{c}^{\chi}$ and $x_{2}\in V_{c^{5}}^{\chi^{-1}}$ with $\chi(c)=q$,
$q$ a primitive $4$th root of $1$. Verifying that $V$ is a quantum
plane and that (\ref{eq: condnainonzero}) and (\ref{eq:
condaijnonzero}) hold, we compute:

\begin{itemize}
\item $\chi(c^{5})\chi^{-1}(c)=q^{5}q^{-1}=1$;

\item $\chi(c)=q$ and $\chi^{-1}(c^{5})=q^{-5}=q^{-1}$ with both $q,q^{-1}$
primitive $4$th roots of unity;

\item $c^{4}\neq1$ and $(c^{5})^{4}=c^{4}\neq1$;

\item $g_{1}g_{2}=c^{6}\neq1$;

\item $\chi^{4}=\varepsilon$ and by definition $\chi_{1}\chi_{2}=\varepsilon$.
\end{itemize}

Now let $B$ be a lifting of $A=\mathcal{B}(V)\#K[\Gamma]$ with nonzero scalars
$a_{1},a$ and with $a_{2}$ arbitrary. We know $B=A^{\alpha}$ where the cocycle
$\alpha\in Z_{K[\Gamma]}^{2}(A,K)$ is given in Remark \ref{rem: cocycle}. Thus
$\alpha(x_{1}^{3}x_{2}^{2}\otimes x_{1}^{3})=\binom{3}{2}_{q}(2)!_{q}%
a^{2}a_{1}=(3)!_{q}a^{2}a_{1}=(q-1)a^{2}a_{1}$. On the other hand
\begin{align*}
&  \lambda\pi m_{A^{\alpha}}(x_{1}^{3}x_{2}^{2}\otimes x_{1}^{3})=\lambda
\pi \left[x_{1}^{3}x_{2}(x_{2}x_{1})x_{1}^{2}\right]\\
&  =\lambda\pi
\left[x_{1}^{3}x_{2}\left(qx_{1}x_{2}+a(1-c^{6})\right)x_{1}^{2}\right]=q\lambda
\pi\left[x_{1}^{3}x_{2}(x_{1}x_{2})x_{1}^{2}\right]+a\lambda\pi\left[x_{1}^{3}x_{2}%
(1-c^{6})x_{1}^{2}\right]
\end{align*}

We note that $c^{6} x_{i} = - x_{i} c^{6}$ and $c^{6}$ and $x_{i}^{2}$
commute. Computing the first summand we obtain:
\begin{align*}
&   q\lambda\pi\left[  x_{1}^{3}\left(
qx_{1}x_{2}+a(1-c^{6})\right) \left(
qx_{1}x_{2}+a(1-c^{6})\right)  x_{1}\right]  \\
&   = q\lambda\pi\left [q^{2} x_{1}^{4} (x_{2}x_{1})^{2} + 2q
ax_{1}^{4}
(1+c^{6})x_{2}x_{1} + a^{2}x_{1}^{4}(1+c^{6})^{2}\right]\\
&   = qa_{1} \lambda\left[(1-c^{4}) \pi\left(-x_{2}x_{1}x_{2}x_{1} + 2qa(1+c^{6}%
)x_{2}x_{1} + a^{2}(1+c^{6})^{2}\right)\right]\\
&   = qa_{1} \lambda\left[(1-c^{4})\left(-a^{2}(1-c^{6})^{2} +
2qa^{2}(1+c^{6})(1-c^{6})
+ a^{2}(1+c^{6})^{2}\right)\right]\\
&   = qa_{1}a^{2}\lambda\left[ 4c^{6} - 4c^{2} +4q(1-c^{4})\right]\\
&  = 4q^{2}a_{1}a^{2} = -4a_{1}a^{2}.
\end{align*}
Computing the last summand we obtain:
\begin{align*}
  a\lambda\pi\left[x_{1}^{3}x_{2}x_{1}^{2}(1-c^{6})\right]  &  = a\lambda \left[\pi
\left(x_{1}^{3}(x_{2}x_{1})x_{1}\right)(1-c^{6})\right]\\
&  =a\lambda \left[\pi\left(x_{1}^{3}(qx_{1}x_{2}+a(1-c^{6}))x_{1}\right)(1-c^{6})\right]\\
&  =a\lambda \left[\lbrack q\pi(x_{1}^{3}x_{1}x_{2}x_{1})+a\pi(x_{1}%
^{3}(1-c^{6})x_{1})](1-c^{6})\right]\\
&  = a \lambda \left[q \pi(x_{1}^{4} x_{2} x_{1})(1-c^{6}) + aa_{1}(1-c^{4}%
)(1+c^{6})(1-c^{6})\right]\\
&  = a_{1}a^{2} \lambda \left[q(1-c^{4})(1-c^{6})^{2} + (1-c^{4})(1+c^{6}%
)(1-c^{6})\right]\\
&  = a_{1}a^{2} \lambda \left[2q(c^{2} - c^{6}) + 2(1-c^{4})\right]
= 2a_{1}a^{2}.
\end{align*}

Thus $\lambda\circ\pi\circ m_{A^{\alpha}}(x_{1}^{3}x_{2}^{2}\otimes x_{1}^{3}
)= -2a_{1}a^{2} \neq\alpha(x_{1}^{3}x_{2}^{2}\otimes x_{1}^{3} )$.
\end{example}

Of course the examples above only show that $\lambda\circ\pi\circ
m_{A^{\alpha}}$ is not the cocycle $\alpha$; it is not known whether
or not $\lambda\circ\pi\circ m_{A^{\alpha}}$ is a cocycle.  Note
also that in \cite{ABM-gauge} we investigate deformations of Hopf
algebras with the dual Chevalley property which naturally involves
$\lambda\circ\pi\circ m_{A^{\alpha}}$ just as a gauge transformation
and not necessarily a cocycle.

Next we show that in many cases
$(g_{1}g_{2})^{r}\neq1$ is enough to ensure that a twisting cocycle
for the lifting of a quantum plane is the composite of the integral
and the cocycle for the pre-bialgebra.

\begin{theorem}
\label{thm: lambdaxi}Let $V=Kx_{1}\oplus Kx_{2}$ be a quantum plane with
$x_{i}\in V_{g_{i}}^{\chi_{i}}$. Let $R=\mathcal{B}(V)$, and let $A:=R\#H$.
Let $B:=A(a_{1},a_{2},a)$ be a lifting of $A$. Let $\alpha$ be the cocycle
that twists $A$ into $B$, i.e. $A^{\alpha}=B$. Then $B=R^{\alpha_{R}%
}\#_{\varepsilon_{\alpha_{R}}}H$. Suppose that one of the following conditions holds:

\begin{itemize}
\item[(i)] $\alpha= \gamma_{i}$ for $i = 1 $ or $2$;

\item[(ii)] $\alpha= \gamma_{1} \ast\gamma_{2}$ and $g_{1}^{r_{1}}g_{2}%
^{r_{2}} \neq1$;

\item[(iii)] The parameter $a:=a_{12}$ is nonzero, $r$ is odd or $r= 2$ and
$(g_{1}g_{2})^{r} \neq1$ where $r:= r_{1} = r_{2}$;

\item[(iv)] The parameter $a$ is nonzero, $r = 2r^{\prime}>2$ and $(g_{1}%
g_{2})^{tr^{\prime}} \neq1$ where $t \in\{1,2,3 \}$ and $(g_{1}g_{2}%
)^{r^{\prime}} g_{2}^{ r} \neq1 \neq(g_{1})^{r}(g_{1}g_{2})^{r^{\prime}}$.
\end{itemize}

Then for $\lambda$ the total integral on the group algebra $H = K[\Gamma]$,
\[
\lambda\circ\varepsilon_{\alpha_{R}} = \alpha_{R}.
\]

\end{theorem}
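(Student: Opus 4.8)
The plan is to reduce the assertion to a concrete equality of scalar functions on monomials. The first sentence of the statement, $B=R^{\alpha_{R}}\#_{\varepsilon_{\alpha_{R}}}H$, is exactly the main theorem of \cite{ABM} applied to the Radford biproduct $A=R\#H$ (whose cocycle is $\varepsilon$) twisted by $\alpha$. Moreover, as recorded just before Remark \ref{rem: cocycle}, the pre-bialgebra cocycle of the lifting satisfies $\lambda\circ\varepsilon_{\alpha_{R}}=\lambda\circ\xi=\lambda\circ\pi\circ m_{A^{\alpha}}$, where $\pi\colon B\to H$ is the Hopf projection with $\pi|_{H}=\mathrm{id}$, $\pi(x_{i})=0$. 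Hence the theorem is equivalent to the identity $\lambda\,\pi\,m_{A^{\alpha}}=\alpha$ on $R\otimes R$, and since $R=\mathcal B(V)$ has basis $\{x_{1}^{i}x_{2}^{k}\}$ it suffices to evaluate both sides on the pairs $x_{1}^{i}x_{2}^{k}\otimes x_{1}^{m}x_{2}^{t}$ and match them against the explicit values of $\alpha$ in Remark \ref{rem: cocycle}. Throughout, $\lambda$ simply extracts the coefficient of $e$, since $\lambda(g)=\delta_{g,e}$.

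First I would dispose of the non-interacting cases. In case (i), $\alpha=\gamma_{i}$ and the only product that leaves $R$ is $x_{i}^{m}\cdot x_{i}^{r_{i}-m}=x_{i}^{r_{i}}=a_{i}(1-g_{i}^{r_{i}})$; applying $\pi$ and then $\lambda$ kills the $g_{i}^{r_{i}}$ term because $g_{i}^{r_{i}}\neq1$ by \eqref{eq: condnainonzero}, leaving $a_{i}=\alpha(x_{i}^{m}\otimes x_{i}^{r_{i}-m})$, while every other monomial product lands in $R\cdot H^{+}$ and is annihilated by $\varepsilon_{R}\otimes\mathrm{id}$. In case (ii), with $a=0$ the variables $q$-commute, so a nontrivial $H$-part occurs only when $i+m=r_{1}$ and $k+t=r_{2}$, producing $\chi_{1}^{it}(g_{2})\,a_{1}a_{2}(1-g_{1}^{r_{1}})(1-g_{2}^{r_{2}})$; expanding and applying $\lambda$, the terms $g_{1}^{r_{1}},g_{2}^{r_{2}}$ vanish by \eqref{eq: condnainonzero} and the cross term vanishes \emph{precisely} by the hypothesis $g_{1}^{r_{1}}g_{2}^{r_{2}}\neq1$, leaving $\chi_{1}^{it}(g_{2})a_{1}a_{2}=\alpha(x_{1}^{i}x_{2}^{k}\otimes x_{1}^{m}x_{2}^{t})$ as in Remark \ref{rem: cocycle}(ix). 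Dropping this hypothesis reinstates the cross term and doubles the answer, which is the mechanism behind the counterexamples.

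The substance lies in (iii) and (iv), where $a\neq0$. Here I would compute $m_{A^{\alpha}}(x_{1}^{i}x_{2}^{k}\otimes x_{1}^{m}x_{2}^{t})$ by repeatedly applying the lifting relation $x_{i}x_{j}=\chi_{j}(g_{i})x_{j}x_{i}+a(1-g_{i}g_{j})$ of Proposition \ref{pr: lift} to normal-order all $x_{1}$'s to the left, together with $x_{j}^{r}=a_{j}(1-g_{j}^{r})$, collecting the $H$-valued contributions. By the weight argument of Lemma \ref{lm: qplbasic lemma} only pairs with $q^{i+m}=q^{k+t}$ contribute, and the $H$-part is a $K$-linear combination of products of the pairwise commuting central factors $(1-g_{1}g_{2})$, $(1-g_{1}^{r})$, $(1-g_{2}^{r})$ times group elements that are products of powers of $g_{1}g_{2}$, $g_{1}^{r}$, $g_{2}^{r}$. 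Applying $\lambda$ extracts the coefficient of $e$, so the desired equality becomes the purely numerical statement that, among all group elements occurring in this expansion, the identity $e$ is reached only through the intended $q$-binomial term of Remark \ref{rem: cocycle}(viii). This is exactly where the arithmetic hypotheses enter: $(g_{1}g_{2})^{r}\neq1$ together with the parity restrictions ($r$ odd or $r=2$ in (iii); $r=2r'$ with $(g_{1}g_{2})^{tr'}\neq1$ for $t\in\{1,2,3\}$ and $(g_{1}g_{2})^{r'}g_{2}^{r}\neq1\neq g_{1}^{r}(g_{1}g_{2})^{r'}$ in (iv)) guarantee that all the spurious elements $g_{1}^{r},g_{2}^{r},(g_{1}g_{2})^{s},g_{1}^{r}(g_{1}g_{2})^{r'},\dots$ differ from $e$, so $\lambda$ annihilates them; the surviving coefficient then matches the $q$-binomial value of $\alpha$ by the same binomial bookkeeping already used to show $\alpha$ is the twisting cocycle.

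The main obstacle I anticipate is organizing this last step: making the list of group elements that can occur in the $H$-part of $m_{A^{\alpha}}(x_{1}^{i}x_{2}^{k}\otimes x_{1}^{m}x_{2}^{t})$ finite and explicit, and then checking case by case on the parity of $r$ that none of the unwanted ones equals $e$ under the stated inequalities. The even case $r=2r'>2$ is the delicate one, since the half-power $(g_{1}g_{2})^{r'}$ can combine with $g_{1}^{r}$ or $g_{2}^{r}$ to hit $e$; this is precisely why (iv) must add the conditions on $(g_{1}g_{2})^{r'}g_{2}^{r}$ and $g_{1}^{r}(g_{1}g_{2})^{r'}$ beyond the single requirement $(g_{1}g_{2})^{r}\neq1$ that already suffices when $r$ is odd. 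Once these inequalities are verified, only the intended summand survives $\lambda$, giving $\lambda\circ\varepsilon_{\alpha_{R}}=\alpha_{R}$ and completing the proof.
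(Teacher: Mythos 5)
Your reduction of the statement to $\lambda\circ\pi\circ m_{A^{\alpha}}=\alpha_{R}$ on basis pairs is correct, and your treatment of cases (i) and (ii) is sound and essentially matches the paper's. But for cases (iii) and (iv) your route diverges from the paper's, and it is precisely there that the proposal has a genuine gap. The paper does not normal-order products in $B$ at all. It invokes \cite[4.11]{ABM} to write $\varepsilon_{\alpha_{R}}=u_{H}\alpha_{R}\ast\varepsilon\ast(H\otimes\alpha_{R}^{-1})\rho_{R\otimes R}$, so that $\lambda\circ\varepsilon_{\alpha_{R}}=\alpha_{R}\ast(\lambda\otimes\alpha_{R}^{-1})\rho_{R\otimes R}$ and the whole theorem reduces to showing $(\lambda\otimes\alpha_{R}^{-1})\rho_{R\otimes R}=\varepsilon_{R\otimes R}$. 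Since $R\otimes R$ is $\Gamma$-graded, $\rho_{R\otimes R}$ attaches to each basis pair $x_{1}^{i}x_{2}^{j}\otimes x_{1}^{k}x_{2}^{l}$ the \emph{single} group element $g_{1}^{i+k}g_{2}^{j+l}$; combined with Lemma \ref{lm: qplbasic lemma} (which forces $q^{i+k}=q^{j+l}$ on the support of $\alpha^{-1}$), one only has to check that $g_{1}^{i+k}g_{2}^{j+l}\neq 1$ under the hypotheses of (iii) and (iv). That is a short arithmetic case analysis and is exactly where the conditions $(g_{1}g_{2})^{tr'}\neq 1$, $(g_{1}g_{2})^{r'}g_{2}^{r}\neq 1\neq g_{1}^{r}(g_{1}g_{2})^{r'}$ come from.

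Your alternative — expanding $m_{B}$ by repeated use of the lifting relations and tracking the $H$-part — is not carried out where it matters, and as described it would not go through cleanly. First, $(1-g_{1}g_{2})$ is \emph{not} central unless $q^{2}=1$ (Lemma \ref{re: chi1chi2varepsilon}(ii)(c)), so the "pairwise commuting central factors" bookkeeping you propose is wrong for $r>2$, which is exactly the delicate case (iv). Second, the normal-ordered expansion produces, for a single pair, a linear combination of many group elements $g_{1}^{u}g_{2}^{v}$ with nontrivial $q$-binomial coefficients; you must both show that none of the unintended ones equals $e$ \emph{and} that the surviving coefficient reproduces the value in Remark \ref{rem: cocycle}(viii) — and that there are no cancellations among terms that do hit $e$. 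None of this is done; it is announced as "the main obstacle" and deferred. The paper's own Examples in Section 4 show that even a single instance of this expansion occupies a page, so the deferred step is the entire content of the theorem in cases (iii) and (iv). The missing idea, in short, is the factorization through the coaction $\rho_{R\otimes R}$, which replaces the combinatorics of $m_{B}$ by the evaluation of $\lambda$ on one group element per basis pair.
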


\begin{proof}
By \cite[4.11]{ABM}, $B = A^{\alpha}= R^{\alpha_{R}} \#_{ \varepsilon
_{\alpha_{R}}} K[\Gamma]$ and $\pi m_{A^{\alpha}} = \varepsilon_{\alpha_{R}} =
u_{H} \alpha_{R} \ast\varepsilon\ast(H \otimes\alpha^{-1}_{R})\rho_{R \otimes
R}$ so that
\[
\lambda\circ\varepsilon_{\alpha_{R}} = \alpha_{R} \ast(\lambda\otimes
\alpha^{-1}_{R})\rho_{R \otimes R}.
\]
Thus since $\alpha_{R}$ is convolution invertible, $\lambda\circ
\varepsilon_{\alpha_{R}} = \alpha_{R}$ if and only if $(\lambda\otimes
\alpha^{-1}_{R})\rho_{R \otimes R} = \varepsilon_{R \otimes R}$.

Suppose first that $\alpha= \gamma_{i}$ and assume that $a_{i}
\neq0$. Then $\alpha^{-1}= \gamma_{-a_{i}}$, the cocycle defined
exactly as $\gamma_{i}$ is but with $a_{i}$ replaced by $-a_{i}$,
and for $0 < n < r_{i}$, $(\lambda \otimes\alpha^{-1})\rho_{R
\otimes R}(x_{i}^{n} \otimes x_{i}^{r_{i} -n})=
\lambda(g_{i}^{r_{i}})(-a_{i}) = 0$ since if $a_{i} \neq0$,
$g_{i}^{r_{i}} \neq 1$. It is clear that
$(\lambda\otimes\alpha^{-1})\rho_{R \otimes R} = \varepsilon_{R
\otimes R}$ on all other elements of $R \otimes R$ and so the
statement holds if $\alpha= \gamma_{i}$.

Now suppose that $\alpha=\gamma_{1}\ast\gamma_{2}$ with
$a_{1},a_{2}$ nonzero. Then $ \alpha^{-1} = \gamma_2^{-1} \ast
\gamma_1^{-1}$  so that again $\alpha$ and $\alpha^{-1}$ are nonzero
on the same elements. By Remark \ref{rem: cocycle}, $\alpha_{R}$ can
be different from $\varepsilon$ only on elements of the form
$z=x_{i}^{n}\otimes x_{i}^{r_{i}-n}$ or $z=x_{1}^{n}x_{2}^{m}\otimes
x_{1}^{r_{1}-n}x_{2}^{r_{2}-m}$. In the first case
$(\lambda\otimes\alpha_{R}^{-1})\rho_{R\otimes R}(z)=0$ by the
argument above
and in the second case, this holds by the assumption that $g_{1}^{r_{1}}%
g_{2}^{r_{2}}\neq1$.

Finally assume that $a\neq0$. Then $\chi_{2}=\chi_{1}^{-1}$. Suppose that
$\chi_{1}(g_{1})=q$, a primitive $r$th root of unity. Then $1=\chi_{1}%
(g_{2})\chi_{2}(g_{1})$ forces $\chi_{1}(g_{2})=q$ and $\chi_{2}(g_{2}%
)^{-1}=q$ as in Lemma \ref{re: chi1chi2varepsilon}. By Lemma
\ref{lm: qplbasic lemma}, $\alpha^{-1}(x_{1}^{i}x_{2}^{j}\otimes x_{1}%
^{k}x_{2}^{l})=\varepsilon_{R\otimes R}(x_{1}^{i}x_{2}^{j}\otimes x_{1}%
^{k}x_{2}^{l})$ unless $q^{i+k}=q^{j+l}$. Clearly $(\lambda\otimes\alpha
_{R}^{-1})\rho_{R\otimes R}(1_{R}\otimes1_{R})=1=\varepsilon_{R\otimes
R}(1_{R}\otimes1_{R})$ and it remains to show that $(\lambda\otimes\alpha
_{R}^{-1})\rho_{R\otimes R}(x_{1}^{i}x_{2}^{j}\otimes x_{1}^{k}x_{2}^{l})=0$
for $i+k\equiv j+l$ mod $r$ with $i+j+k+l>0$.

Suppose that $i + k = 0$ so that $i = k = 0$. Then $j + l = r$ and
$(\lambda\otimes\alpha^{-1}_{R})\rho_{R \otimes R}( x_{2}^{j} \otimes
x_{2}^{l}) = \lambda(g_{2}^{r})\alpha^{-1}( x_{2}^{j} \otimes x_{2}^{l})$. If
$a_{2} \neq0$ then $g_{2}^{r} \neq1$, and $\lambda(g_{2}^{r}) = 0$. If $a_{2}
= 0$, then $\alpha^{-1}( x_{2}^{j} \otimes x_{2}^{l}) = - a_{2} = 0 $. The
case where $i+k = r$ and $j+l = 0$ is similar.

Now consider $0 < i+k <2r$, $0 < j+l <2r$ and $j+l \equiv i+k $ mod $r$. Then
\[
(\lambda\otimes\alpha^{-1}_{R})\rho_{R \otimes R}(x_{1}^{i}x_{2}^{j} \otimes
x_{1}^{k} x_{2}^{l}) = \lambda(g_{1}^{i+k} g_{2}^{ j+l })\alpha^{-1}(x_{1}%
^{i}x_{2}^{j} \otimes x_{1}^{k} x_{2}^{l}).
\]
This expression is $0$ unless $g_{1}^{i+k} g_{2}^{ j+l } = 1$. If $g_{1}^{i+l}
g_{2}^{ j+l } = 1$ then $1 = \chi_{1}(g_{1}^{i+k} g_{2}^{ j+l }) = q^{2(i+k)}$
so that $r$ divides $2(i+k)$. If $r$ is odd, then $r$ must divide $i+k$ and
thus $i+k = j+l = r$. If $(g_{1}g_{2})^{r} \neq1$, then $(\lambda\otimes
\alpha^{-1}_{R})\rho_{R \otimes R}(x_{1}^{i}x_{2}^{j} \otimes x_{1}^{k}
x_{2}^{l}) =0$.

If $r = 2$, then the nonzero possibilities for $i+k$ and $j+l$ are $1$ and
$2$. Thus $i+k = j+l$. If $i+k=j+l = 1$ then since $g_{1}g_{2} \neq1$, the
statement holds. If $i+k=j+l = 2= r$, then $(g_{1}g_{2})^{2} \neq1$ guarantees
that $(\lambda\otimes\alpha^{-1}_{R})\rho_{R \otimes R}(x_{1}^{i}x_{2}^{j}
\otimes x_{1}^{k} x_{2}^{l}) =0$.

Suppose $r = 2r^{\prime}>2$. Then $q^{2(i+k)} = 1$ implies that $r^{\prime}$
divides $i+k$ so that $i+k = j+l \in\{ r^{\prime}, 2r^{\prime}= r, 3r^{\prime
}\}$ or else $i+k = r^{\prime}$, $j+l = 3r^{\prime}$ or $i+k = 3r^{\prime},
j+l = r^{\prime}$. The conditions in (iv) imply that in all cases
$(\lambda\otimes\alpha^{-1}_{R})\rho_{R \otimes R}(x_{1}^{i}x_{2}^{j} \otimes
x_{1}^{k} x_{2}^{l}) =0$.
\end{proof}

The above theorem can be used to find sufficient conditions for $\alpha=
\lambda\circ\varepsilon_{\alpha_{R}}$ for various quantum linear spaces,
although general statements become unwieldy. The next corollary is a simple
extension of (ii) in the theorem.

\begin{corollary}
Let $W = \oplus_{i=1}^{\theta}Kx_{i}$ be a quantum linear space and
$A:= \mathcal{B}(W) \# K[\Gamma] = R \# H$. Let $\alpha\in
Z^{2}_{H}(A,K)$ be of the form $\alpha= \prod_{i \in I} \gamma_{i}$
with $I \subseteq\{1, \ldots, \theta\}$ and $\gamma_i \neq
\varepsilon$. If $g_{j_{1}}^{r_{j_{1}}} \ldots g_{j_{t}}^{r_{j_{t}}}
\neq1 $ for $\{ j_{1}, \ldots, j_{t} \}$ any nonempty subset of $I$,
then $\lambda\circ \pi\ \circ m_{B} = \alpha_{R}$.
\end{corollary}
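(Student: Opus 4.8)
The plan is to mirror the proof of Theorem~\ref{thm: lambdaxi}. By \cite[4.11]{ABM} we again have $B=A^{\alpha}=R^{\alpha_{R}}\#_{\varepsilon_{\alpha_{R}}}K[\Gamma]$ and $\pi m_{B}=\varepsilon_{\alpha_{R}}=u_{H}\alpha_{R}\ast\varepsilon\ast(H\otimes\alpha_{R}^{-1})\rho_{R\otimes R}$, so that
\[
\lambda\circ\pi\circ m_{B}=\lambda\circ\varepsilon_{\alpha_{R}}=\alpha_{R}\ast(\lambda\otimes\alpha_{R}^{-1})\rho_{R\otimes R}.
\]
Since $\alpha_{R}$ is convolution invertible, the claim $\lambda\circ\pi\circ m_{B}=\alpha_{R}$ is equivalent to $(\lambda\otimes\alpha_{R}^{-1})\rho_{R\otimes R}=\varepsilon_{R\otimes R}$, and I would check this identity on the PBW basis elements $X\otimes Y$ of $R\otimes R$, writing $X=x_{1}^{n_{1}}\cdots x_{\theta}^{n_{\theta}}$, $Y=x_{1}^{N_{1}}\cdots x_{\theta}^{N_{\theta}}$ and $g_{X}:=g_{1}^{n_{1}}\cdots g_{\theta}^{n_{\theta}}$, $g_{Y}:=g_{1}^{N_{1}}\cdots g_{\theta}^{N_{\theta}}$.

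The crucial step is to determine where $\alpha_{R}^{-1}$ can be nonzero. Each $\gamma_{i}$ equals $\varepsilon$ except on the diagonal powers $x_{i}^{m}\otimes x_{i}^{r_{i}-m}$ (Proposition~\ref{pr: gamma(ai)}), and the $\gamma_{i}$ pairwise commute, so $\alpha^{-1}=\prod_{i\in I}\gamma_{i}^{-1}$ has the same shape. Expanding this convolution product through the coproduct of $R\otimes R$ and using that $\gamma_{i}$ forces its tensor slot to be grouplike in every variable other than $x_{i}$, one sees --- exactly as in the $\theta=2$ computation of Proposition~\ref{pr: gamma(ai)} --- that each index $i\in I$ contributes a trivial factor when $n_{i}=N_{i}=0$ and a factor proportional to $\delta_{n_{i}+N_{i},r_{i}}$ otherwise. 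Hence $\alpha_{R}^{-1}(X\otimes Y)\neq 0$ forces $n_{i}=N_{i}=0$ for $i\notin I$ and, for each $i\in I$, either $n_{i}=N_{i}=0$ or $n_{i}+N_{i}=r_{i}$ with $0<n_{i}<r_{i}$. The precise braiding scalar in the value is irrelevant for what follows, so I would only record this support statement.

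Granting the support description, the conclusion is a degree count. On $1\otimes 1$ both sides are $1$. If $X\otimes Y\neq 1\otimes 1$ lies in the support of $\alpha_{R}^{-1}$, set $J:=\{i\in I:n_{i}+N_{i}=r_{i}\}$, a nonempty subset of $I$; all exponents vanish outside $J$, so
\[
g_{X}g_{Y}=\prod_{i\in J}g_{i}^{\,n_{i}+N_{i}}=\prod_{i\in J}g_{i}^{\,r_{i}}\neq 1
\]
by hypothesis, whence $\lambda(g_{X}g_{Y})=0$ and $(\lambda\otimes\alpha_{R}^{-1})\rho_{R\otimes R}(X\otimes Y)=\lambda(g_{X}g_{Y})\alpha_{R}^{-1}(X\otimes Y)=0=\varepsilon(X)\varepsilon(Y)$, the last equality holding because $X$ or $Y$ is then a nontrivial monomial. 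On elements outside the support of $\alpha_{R}^{-1}$ both sides again vanish, which finishes the verification.

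I expect the genuine work to be concentrated in the support description of the middle paragraph: one must make precise how the iterated convolution $\prod_{i\in I}\gamma_{i}$ factorizes variable-by-variable once the coproduct is expanded, so that each $i\in I$ decouples into its own single-variable contribution. This is a bookkeeping generalization of the two-variable case already settled in Proposition~\ref{pr: gamma(ai)}; once it is in place, the integral $\lambda$ annihilates $g_{X}g_{Y}=\prod_{i\in J}g_{i}^{r_{i}}$ by the displayed hypothesis and the degree argument closes immediately.
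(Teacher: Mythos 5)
Your argument is correct and is essentially the paper's own proof, which simply refers back to cases (i) and (ii) of Theorem \ref{thm: lambdaxi}: reduce via \cite[4.11]{ABM} to showing $(\lambda\otimes\alpha_R^{-1})\rho_{R\otimes R}=\varepsilon_{R\otimes R}$, observe that $\alpha_R^{-1}$ is supported on basis elements whose nonzero exponents satisfy $n_i+N_i=r_i$ for $i$ in some nonempty $J\subseteq I$, and kill these with $\lambda\bigl(\prod_{i\in J}g_i^{r_i}\bigr)=0$. The support description you defer is exactly the content of Proposition \ref{pr: gamma(ai)} and Remark \ref{rem: cocycle}(ix) extended to $\theta$ variables, so no new idea is needed.
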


\begin{proof}
The proof is the same as the proof of the sufficiency of (i) or (ii) in the
above theorem.
\end{proof}

\appendix
\section{Basis preserving multiplications}
 Throughout this appendix, as in the paper,  $\Gamma$ is a finite abelian group,
 $H$ is the group algebra $K[\Gamma]$,
 and $W = \sum_{i=1}^\theta Kx_i$ is a quantum linear space with $x_i
 \in W^{\chi_i}_{g_i}$. As well, throughout this section $A=R\#H$ with
$R=\mathcal{B}(W)$. When needed for emphasis we write $\cdot_A$ for
multiplication in $A$; if the  context is clear we write
multiplication in $A$ as concatenation.

\begin{definition}
   Consider   a map $\mu:A\otimes
A\rightarrow A$ and denote by $x\cdot_{\mu}y$ the product
$\mu(x\otimes y) $ for all $x,y\in A$.      We say that $\mu$
\textbf{preserves the basis of }$A$ whenever $\mu$ is $H$-bilinear (where $A\otimes A$ is an $H$-bimodule via $h(a\otimes b)k = ah
\otimes bk$ and $A$ is an $H$-bimodule via $h(a)k = hak$),
$H$-balanced (i.e. $\mu(ah\otimes b)=\mu(a\otimes hb)$), associative and unitary  with respect to $1_{A}$  and
 for all $0\leq n_{i}\leq
r_{i}-1,g\in\Gamma$
\begin{equation}
x_{1}^{n_{1}}\cdots x_{\theta}^{n_{\theta}}g=x_{1}^{\cdot_{\mu}n_{1}}%
\cdot_{\mu}\cdots\cdot_{\mu}x_{\theta}^{\cdot_{\mu}n_{\theta}}\cdot_{\mu}g.
\label{form:presbasis}%
\end{equation}

\end{definition}

 Note that $H$-bilinearity of $\mu$ implies that for $h,l \in H$, $h
\cdot_\mu l = hl$. Let $X_\mu$ denote
$x_{1}^{\cdot_{\mu}n_{1}}\cdot_{\mu}\cdots\cdot_{\mu}x_{\theta}^{\cdot_{\mu
}n_{\theta}}$.  Since   for all $ 0\leq n_{i}\leq
r_{i}-1,g\in\Gamma$,
\begin{gather*}
X_\mu \cdot_{\mu}g=   X_\mu \cdot_{\mu }\left(  1_{A}g\right)
 =\left(  X_\mu  \cdot_{\mu}1_{A}\right) g=X_\mu  g
\end{gather*}
then  (\ref{form:presbasis}) is equivalent to
\begin{equation} \label{form:presbasis no g}
x_{1}^{n_{1}}\cdots
x_{\theta}^{n_{\theta}}=x_{1}^{\cdot_{\mu}n_{1}}\cdot
_{\mu}\cdots\cdot_{\mu}x_{\theta}^{\cdot_{\mu}n_{\theta}}.
\end{equation}

The proof of the next lemma is immediate.

\begin{lemma}
\label{lem: basis}
 Let $\mu: A \otimes  A \rightarrow A$ be $H$-bilinear,
$H$-balanced, associative and unitary  with respect to $1_{A}$.
 Then $\mu$ preserves the basis of $A$ if and only if the following
conditions hold: \begin{itemize} \item[(1)] $x_{i}^{\cdot_{A}m}$
$\cdot_{\mu}x_{i} =x_{i}^{\cdot_{A}\left( m+1\right)  }$ for all $i$
and for all $0 \leq m < r_{i}-1$;\\
\item[(2)] $\left(
x_{1}^{\cdot_{A}n_{1}}\cdot_{A}\cdots\cdot_{A}x_{s-1}^{\cdot
_{A}n_{s-1}}\right)  \cdot_{\mu}x_{s}^{\cdot_{A}n_{s}}=x_{1}^{\cdot_{A}n_{1}%
}\cdot_{A}\cdots\cdot_{A}x_{s-1}^{\cdot_{A}n_{s-1}}\cdot_{A}x_{s}^{\cdot
_{A}n_{s}}$ for all $1\leq s\leq\theta$ and for all $0\leq n_{i}\leq r_{i}-1.$
\end{itemize}
\end{lemma}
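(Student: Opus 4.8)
The plan is to prove the equivalence stated in Lemma \ref{lem: basis} by unwinding the definition of ``preserves the basis'' and showing that condition (\ref{form:presbasis no g}) splits naturally into the two listed conditions. Since $\mu$ is already assumed to be $H$-bilinear, $H$-balanced, associative and unitary, the only content to verify is that the single family of relations (\ref{form:presbasis no g}), namely $x_{1}^{n_{1}}\cdots x_{\theta}^{n_{\theta}} = x_{1}^{\cdot_{\mu}n_{1}}\cdot_{\mu}\cdots\cdot_{\mu}x_{\theta}^{\cdot_{\mu}n_{\theta}}$ for all $0\leq n_{i}\leq r_{i}-1$, is equivalent to the conjunction of (1) and (2).

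For the forward direction, I would assume (\ref{form:presbasis no g}) holds and derive (1) and (2). Condition (1) is the special case where only the $i$-th exponent is nonzero: taking $n_i = m+1$ and all other exponents zero, (\ref{form:presbasis no g}) reads $x_i^{\cdot_A(m+1)} = x_i^{\cdot_\mu(m+1)} = x_i^{\cdot_\mu m}\cdot_\mu x_i$, and by induction $x_i^{\cdot_\mu m} = x_i^{\cdot_A m}$, giving (1). For condition (2), the key observation is that each $\mu$-power $x_i^{\cdot_\mu n_i}$ equals the genuine power $x_i^{\cdot_A n_i}$ — this follows from (1) by induction on $n_i$ — so (\ref{form:presbasis no g}) says precisely that the iterated $\mu$-product of the honest powers $x_s^{\cdot_A n_s}$ reproduces the honest $\cdot_A$-product. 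Peeling off the last factor and using associativity of $\mu$ yields (2).

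For the converse, I would assume (1) and (2) and prove (\ref{form:presbasis no g}) by induction on $s$, the number of factors, using (2) at each stage to convert a $\cdot_\mu$-product of the first $s$ factors into the corresponding $\cdot_A$-product, while (1) (inducted over the exponents) guarantees $x_i^{\cdot_\mu n_i} = x_i^{\cdot_A n_i}$ so that the individual $\mu$-powers agree with the $A$-powers being multiplied. Associativity of $\mu$ is what lets me regroup the iterated product so that condition (2) applies to exactly one new factor at a time.

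The main obstacle, such as it is, is purely bookkeeping: one must be careful that the ``powers'' $x_i^{\cdot_\mu n_i}$ appearing in (\ref{form:presbasis no g}) are computed with the $\mu$-multiplication, whereas condition (2) is phrased with the $\cdot_A$-powers $x_i^{\cdot_A n_i}$, so the real work is the preliminary induction showing $x_i^{\cdot_\mu n_i} = x_i^{\cdot_A n_i}$ (for $0 \leq n_i \leq r_i - 1$) directly from (1). Once this identification is in hand, associativity makes both implications routine, which is exactly why the lemma is stated as having an immediate proof.
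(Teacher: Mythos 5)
Your proposal is correct: the paper itself gives no argument (it declares the proof ``immediate''), and your unwinding --- reducing to (\ref{form:presbasis no g}), extracting (1) as the single-variable case, establishing $x_i^{\cdot_\mu n_i}=x_i^{\cdot_A n_i}$ from (1), and then passing between the iterated $\mu$-product and the $\cdot_A$-product one factor at a time via (2) and associativity --- is exactly the routine verification the authors intend. No gaps.
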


\begin{lemma}
\label{lem:Relations} Let $\mu: A \otimes A \rightarrow A$ be a map
which
   preserves the basis of $A.$
Suppose that for each $i$ there exist $m_{i}, n_{i}$ with $0\leq
m_{i},n_{i}\leq r_{i}-1,$ $m_{i}+n_{i}=r_{i}$, such that
$x_{i}^{\cdot_{A}m_{i}}\cdot_{\mu }x_{i}^{\cdot_{A}n_{i}}\in H$ and
set $h_{i}:=x_{i}^{\cdot_{A}m_{i}}\cdot
_{\mu}x_{i}^{\cdot_{A}n_{i}}.$ Also suppose for all $1\leq
j<i\leq\theta$ then $x_{i}\cdot_{\mu}x_{j}-\chi_{j}\left(
g_{i}\right) x_{j}\cdot_{\mu }x_{i}\in H$ and set
$h_{i,j}:=x_{i}\cdot_{\mu}x_{j}-\chi_{j}\left( g_{i}\right)
x_{j}\cdot_{\mu}x_{i}$. Then  $\mu$ is completely determined by
$\left(  h_{i}\right)  _{1\leq i\leq\theta},\left( h_{u,v}\right)
_{1\leq u,v\leq\theta}$.
\end{lemma}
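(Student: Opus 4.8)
The plan is to recast the statement as a uniqueness assertion: if $\mu$ and $\mu'$ both preserve the basis of $A$ and satisfy $h_i=h_i'$ for all $i$ and $h_{i,j}=h_{i,j}'$ for all $j<i$, then $\mu=\mu'$. Since $\mu$ is $H$-bilinear and $H$-balanced, its value on an arbitrary pair of basis elements $Xg\otimes Yg'$ (with $X=x_1^{n_1}\cdots x_\theta^{n_\theta}$, $Y=x_1^{N_1}\cdots x_\theta^{N_\theta}$, $0\le n_i,N_i\le r_i-1$, and $g,g'\in\Gamma$) is determined by the single value $\mu(X\otimes Y)$ together with the known commutation scalars of $A$; indeed $\mu(Xg\otimes Yg')=\chi_Y(g)\,\mu(X\otimes Y)\cdot_A(gg')$ where $\chi_Y=\prod_i\chi_i^{N_i}$. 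Thus it suffices to pin down $\mu(X\otimes Y)$ for basis monomials $X,Y\in R$ using only the data $(h_i),(h_{i,j})$.

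First I record the facts I would use. From $H$-bilinearity, $\mu$ and $\cdot_A$ agree whenever one argument lies in $H$, so in particular the cross relation $h\cdot_\mu x_i=\chi_i(h)\,x_i\cdot_\mu h$ holds in $(A,\mu)$. Iterating condition $(1)$ of Lemma \ref{lem: basis} gives $x_i^{\cdot_\mu k}=x_i^{\cdot_A k}$ for $0\le k\le r_i-1$, whence by associativity and the hypothesis $x_i^{\cdot_\mu r_i}=x_i^{\cdot_A m_i}\cdot_\mu x_i^{\cdot_A n_i}=h_i$. Finally the commutation hypothesis reads $x_i\cdot_\mu x_j=\chi_j(g_i)\,x_j\cdot_\mu x_i+h_{i,j}$ for $j<i$. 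By the basis-preserving identity (\ref{form:presbasis no g}), each monomial $X$ equals the $\mu$-word $x_1^{\cdot_\mu n_1}\cdot_\mu\cdots\cdot_\mu x_\theta^{\cdot_\mu n_\theta}$, so $X\cdot_\mu Y$ is itself a $\mu$-product of the generators $x_1,\dots,x_\theta$.

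The heart of the argument is then a reduction of an arbitrary $\mu$-word $w=y_1\cdot_\mu\cdots\cdot_\mu y_k$ in the generators to a $K$-combination of basis monomials, carried out by a double induction on the pair $(k,\#\{\text{inversions of }w\})$. If $w$ is not sorted, one adjacent swap via the commutation relation replaces $w$ by $\chi_j(g_i)$ times a word of the same length with one fewer inversion, plus a word of length $k-2$ into which the factor $h_{i,j}\in H$ has been inserted; if $w$ is sorted but some exponent reaches $r_i$, the relation $x_i^{\cdot_\mu r_i}=h_i$ shortens it while inserting $h_i\in H$; and a sorted word with all exponents below $r_i$ is, by (\ref{form:presbasis no g}), already a basis monomial. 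Any inserted $H$-factor is pushed to one end through the cross relations, leaving a known element of $H$ times a strictly shorter generator-word, to which the outer induction applies. Since every rewriting step uses only $\chi_j(g_i)$, the shared data $h_i,h_{i,j}$, and the fixed multiplication of $A$, the expressions produced from $\mu$ and from $\mu'$ coincide term by term, giving $\mu(X\otimes Y)=\mu'(X\otimes Y)$ and hence $\mu=\mu'$.

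I expect the bookkeeping of this reduction to be the main obstacle: one must check that the lexicographic measure $(k,\text{inversions})$ strictly decreases at each step, so that the induction is well-founded, and that the factors $h_i,h_{i,j}$ can always be extracted through the cross relations without leaving the span of generator-words. Confluence is not a concern, because $\mu$ is a genuine associative product and $\mu(X\otimes Y)$ is a single fixed element of $A$; I only need the existence of one reduction expressed entirely in the given data.
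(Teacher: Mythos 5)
Your proposal is correct and follows essentially the same route as the paper: reduce to basis monomials $X\otimes Y$ via $H$-bilinearity and balancedness (picking up the character scalar $\chi_Y(g)$ and the factor $gg'$), then straighten the resulting $\mu$-word using the relations $x_i^{\cdot_\mu r_i}=h_i$ and $x_i\cdot_\mu x_j=\chi_j(g_i)\,x_j\cdot_\mu x_i+h_{i,j}$, with every coefficient produced depending only on the $g_i$, $\chi_i$ and the given data. The paper packages the straightening as the inductive commutation formula (\ref{eqn: comm ij}), whereas you organize it as a terminating rewriting induction on (length, inversions); these are the same argument.
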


\begin{proof}   Let $X=x_{1}^{  m_{1}}  \cdots
x_{\theta}^{
m_{\theta}}=x_{1}^{\cdot_{\mu}m_{1}}\cdot_{\mu}\cdots\cdot_{\mu}x_{\theta
}^{\cdot_{\mu}m_{\theta}}$, $ Y= x_1^{n_1} \cdots
x_\theta^{n_\theta} =x_{1
}^{\cdot_{\mu}n_{1}}\cdots\cdot_{\mu}x_{\theta}^{\cdot_{\mu
}n_{\theta}}  $.
 Since $A$ has basis $\{ Xg = X \cdot_\mu g| X \text{ as above }, g \in \Gamma
 \}$,
 $\mu$ is determined by the elements $(X \cdot_\mu g) \cdot_\mu (Y
\cdot_\mu h)$ for $g,h \in \Gamma$ and
\begin{align*}
&  (X \cdot_\mu g) \cdot_\mu (Y \cdot_\mu h) =X \cdot_{\mu}\left(
g\cdot_{\mu}\left( Y \cdot_{\mu}h\right) \right)
   \overset{\mu\text{ left }H\text{-lin}}{=} X \cdot_{\mu}
    \left(
 g \left( Y_{\mu} \cdot_{\mu} h \right)  \right)
\\
&  = X \cdot_{\mu}\left(  g\left( Y h\right)  \right)
 =
X \cdot_{\mu}\left(  Y gh\right) \chi \left( g\right)
  =
   \left(  X \right)  \cdot_{\mu}\left( Y \right)
\cdot_{\mu}\left(  gh\right)     \chi    \left( g\right),
\end{align*}
where $\chi =
\chi_{1}^{n_{1}}\ast\cdots\ast\chi_{\theta}^{n_{\theta}  }$ so that
$\mu$ is determined by the elements $
X \cdot_\mu Y $.  It is straightforward to prove by induction on $n\geq1$ that%
\begin{equation}\label{eqn: comm ij}
x_{i}^{\cdot_{\mu}m}\cdot_{\mu}x_{j}^{\cdot_{\mu}n}=\sum_{t=0}^{\min\left(
m,n\right)  }x_{j}^{\cdot_{\mu}\left(  n-t\right)  }\cdot_{\mu}x_{i}%
^{\cdot_{\mu}\left(  m-t\right)
}\cdot_{\mu}\beta_{m,n,t}^{i,j}\left( h_{i,j}\right)
\end{equation}
where $\beta_{m,n,t}^{i,j}\left(  h_{i,j}\right) \in H  $ only
depends on $g_{1},\ldots,g_{\theta},\chi_{1},\ldots,\chi_{\theta}$
and $h_{i,j}$. Note that (\ref{eqn: comm ij}) holds also for $m=0$
or $n=0$.

Thus
\begin{align*}
&  X  \cdot_{\mu}Y    =\sum_{0\leq t_{1}\leq
m_{1}+n_{1},\cdots,0\leq t_{\theta}\leq m_{\theta
}+n_{\theta}}x_{1}^{\cdot_{\mu}t_{1}}\cdot_{\mu}\cdots\cdot_{\mu}x_{\theta
}^{\cdot_{\mu}t_{\theta}}\cdot_{\mu}\gamma_{t_{1},\cdots,t_{\theta}}\left(
\left(  h_{u,v}\right)  _{1\leq u,v\leq\theta}\right)
\end{align*}
where $\gamma_{t_{1},\cdots,t_{\theta}}\left(  \left(  h_{u,v}\right)  _{1\leq
u,v\leq\theta}\right)  \in H$ only depends on $g_{1},\ldots,g_{\theta}%
,\chi_{1},\ldots,\chi_{\theta}$ and $\left(  h_{u,v}\right)  _{1\leq
u,v\leq\theta}$.

Since $x_i^n = x_i^{\cdot_\mu n}$ for $n<r_i$ and  $x_{i}^{\cdot_{\mu}r_{i}}=x_{i}^{\cdot_{\mu}m_{i}}\cdot_{\mu}%
x_{i}^{\cdot_{\mu}n_{i}}=x_{i}^{m_{i}}\cdot_{\mu}x_{i}^{n_{i}}=h_{i}\in
H$, it remains to consider
  $x_{i}^{\cdot_{\mu}n}$ where $n >r_i$. Suppose that $n = qr_i +
  R$. Then
\[
x_{i}^{\cdot_{\mu}n}=x_{i}^{\cdot_{\mu}\left[ R + qr_i   \right]
}=x_{i}^{\cdot_{\mu}R  }\cdot _{\mu}\left(
x_{i}^{\cdot_{\mu}r_{i}}\right) ^{\cdot_{\mu}q   }=x_{i}^{R
}\cdot_{\mu}h_{i} ^{ q   } = x_{i}^{R } h_{i} ^{ q   }
\]
and the statement is proved. \end{proof}

\vspace{1mm}

\begin{proposition}
\label{pro:mu}For $W,H,R,A$ as defined throughout this appendix, let
$B:=A(a_{i},a_{ij}|1\leq i,j\leq\theta)$ be a lifting of $A$ where
$a_{i},a_{ij}$ are  scalars, not necessarily nonzero.

\par Let $\varphi:A\rightarrow B$ map the basis of $A$ to the basis of $B$ by $\varphi(x_{1}^{\cdot
_{A}n_{1}}\cdot_{A}\cdots\cdot_{A}x_{\theta}^{\cdot_{A}n_{\theta}}\cdot_{A}g)
= x_{1}^{\cdot_{B}n_{1}}\cdot_{B}\cdots
\cdot_{B}x_{\theta}^{\cdot_{B}n_{\theta}}\cdot_{B}g$. Then $\varphi$
is an $H$-bilinear coalgebra isomorphism. Let $\varsigma:A\otimes
A\rightarrow A$ be defined by $\varsigma:=\varphi^{-1}m_{B}\left(
\varphi\otimes\varphi\right)$. Then $\varsigma$  preserves the basis
of $A$.    If we regard $A$ as an algebra through $\varsigma$,
$\varphi$ is a bialgebra isomorphism.

\par Let $\mu: A \otimes A \rightarrow A$ be a map which preserves
the basis of $A.$ Suppose that for each $i$ there exist $m_{i},
n_{i}$ with
$0\leq m_{i},n_{i}\leq r_{i}-1,$ $m_{i}+n_{i}=r_{i}$, such that $x_{i}%
^{\cdot_{A}m_{i}}\cdot_{\mu}x_{i}^{\cdot_{A}n_{i}}=a_{i}(1_{A}-g_{i}^{r_{i}})$
and that $x_{i}\cdot_{\mu}x_{j}-\chi_{j}\left(  g_{i}\right)
x_{j}\cdot_{\mu }x_{i}=a_{ij}(1-g_{i}g_{j})$ for all $1\leq
j<i\leq\theta.$ Then  $\mu=\varsigma.$
\end{proposition}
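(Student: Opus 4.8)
The plan is to reduce the claim to Lemma~\ref{lem:Relations}, which states that a basis-preserving map is completely determined by the families $(h_i)_{1\leq i\leq\theta}$ and $(h_{i,j})_{1\leq j<i\leq\theta}$. By hypothesis $\mu$ preserves the basis of $A$, and by the first part of the proposition so does $\varsigma$; hence it suffices to verify that the two maps produce \emph{identical} data. For $\mu$ these are handed to us: $h_i^{\mu}=x_i^{\cdot_A m_i}\cdot_{\mu}x_i^{\cdot_A n_i}=a_i(1_A-g_i^{r_i})$ and, for $j<i$, $h_{i,j}^{\mu}=x_i\cdot_{\mu}x_j-\chi_j(g_i)\,x_j\cdot_{\mu}x_i=a_{ij}(1-g_ig_j)$. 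So the entire task is to compute the corresponding quantities for $\varsigma$ and check they agree.

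First I would record two properties of $\varphi$. Since $m_i,n_i\leq r_i-1$, the elements $x_i^{\cdot_A m_i}$ and $x_i^{\cdot_A n_i}$ are basis elements, so by the defining formula for $\varphi$ (all other exponents and the group part trivial) one has $\varphi(x_i^{\cdot_A m_i})=x_i^{\cdot_B m_i}$ and likewise for $n_i$; moreover $\varphi$ fixes every grouplike, so $\varphi|_H=\mathrm{id}_H$ and therefore $\varphi^{-1}|_H=\mathrm{id}_H$. Using $\varsigma=\varphi^{-1}m_B(\varphi\otimes\varphi)$, the identity $m_i+n_i=r_i$, and the lifting relation $x_i^{r_i}=a_i(1-g_i^{r_i})$ in $B$, I then obtain
\[
x_i^{\cdot_A m_i}\cdot_{\varsigma}x_i^{\cdot_A n_i}
=\varphi^{-1}\!\left(x_i^{\cdot_B m_i}\cdot_B x_i^{\cdot_B n_i}\right)
=\varphi^{-1}\!\left(x_i^{\cdot_B r_i}\right)
=\varphi^{-1}\!\left(a_i(1-g_i^{r_i})\right)
=a_i(1-g_i^{r_i}),
\]
so $h_i^{\varsigma}=h_i^{\mu}$.

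Analogously, for $j<i$ the defining relation $x_i\cdot_B x_j=\chi_j(g_i)\,x_j\cdot_B x_i+a_{ij}(1-g_ig_j)$ of the lifting, the $K$-linearity of $\varphi^{-1}$, and $\varphi^{-1}|_H=\mathrm{id}_H$ give
\[
x_i\cdot_{\varsigma}x_j-\chi_j(g_i)\,x_j\cdot_{\varsigma}x_i
=\varphi^{-1}\!\left(x_i\cdot_B x_j-\chi_j(g_i)\,x_j\cdot_B x_i\right)
=a_{ij}(1-g_ig_j),
\]
so $h_{i,j}^{\varsigma}=h_{i,j}^{\mu}$. In particular both families lie in $H$, so the hypotheses of Lemma~\ref{lem:Relations} hold for $\varsigma$ with the very same choice of $m_i,n_i$. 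As $\mu$ and $\varsigma$ are basis-preserving maps with identical data $(h_i)$ and $(h_{i,j})$, Lemma~\ref{lem:Relations} forces $\mu=\varsigma$.

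I do not anticipate a serious obstacle: once Lemma~\ref{lem:Relations} is in hand the argument is pure bookkeeping. The only point demanding care is the interface with $\varphi$ --- namely checking that $\varphi$ carries the normalized ordered generator powers of $A$ to those of $B$ and restricts to the identity on $H$, so that conjugation by $\varphi$ converts the single products above in $B$ into the sought values of $h_i$ and $h_{i,j}$ in $H$. Granting this, the two finite families of structure constants coincide and the proof closes.
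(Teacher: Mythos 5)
Your proof of the final claim $\mu=\varsigma$ is essentially identical to the paper's: both compute the structure data $h_i$ and $h_{i,j}$ for $\varsigma$ by conjugating the lifting relations of $B$ through $\varphi$ and then invoke Lemma~\ref{lem:Relations} to conclude that two basis-preserving maps with the same data coincide. The only thing you elide is the verification of the proposition's first paragraph (that $\varphi$ is an $H$-bilinear coalgebra isomorphism and that $\varsigma$ preserves the basis), which the paper establishes briefly before the computation; since you cite it rather than prove it, you should add the short check of $H$-bilinearity of $\varphi$ and note that $\varsigma$ inherits basis preservation from $m_B$.
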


\begin{proof}
 Since the coalgebra structures of
$A$ and $B$ are defined in the same way with respect to the
corresponding basis, $\varphi$ is a bijective coalgebra map.
Furthermore, $\varphi$ is $H$-bilinear since for $X_A:=
x_{1}^{\cdot_{A}n_{1}}\cdot_{A}\cdots
\cdot_{A}x_{\theta}^{\cdot_{A}n_{\theta}}$, $X_B:=
x_{1}^{\cdot_{B}n_{1}}\cdot_{B}\cdots\cdot_{B}x_{\theta
}^{\cdot_{B}n_{\theta}}  $, $\chi:=
\chi_{1}^{n_{1}}\ast\cdots\ast\chi_{\theta }^{n_{\theta}}$,
\begin{align*}
&  \varphi\left[  h\cdot_{A}\left( X_A \cdot_{A}g\right)
\cdot_{A}l\right]
   =
  \varphi\left[ \chi \left(  h\right)  \left(  X_A \cdot_{A}g\right)
\cdot_{A}h\cdot_{A}l\right]
\\
 & =
  \varphi\left[  \chi  \left(
h\right)  \left( X_A \cdot_{A}g\right) \cdot_{A}\left(  hl\right)
\right]
  = \chi \left(  h\right)  \left( X_B \cdot_{B}g\right)
\cdot_{B}\left( hl\right)
\\
  & =
    \chi \left(  h \right)
     \left( X_B \cdot_{B}g \right)
  \cdot_{B}\left( h \cdot_{B}l \right)
  = h\cdot_{B}\left( X_B \cdot_{B}g\right)  \cdot_{B}l =
  h\cdot_{B}\varphi\left(  X_A \cdot_{A}g\right)  \cdot_{B}l.
\end{align*}

Since $m_{B}$   preserves the basis of $B $, then $\varsigma$
preserves the basis of $A$.

For all $0\leq m \leq r_{i}-1,$ $ n=r_{i}-m,$ we have that $
x_{i}^{\cdot_{A}m}\cdot_{\varsigma}x_{i}^{\cdot_{A}n}
=a_{i}(1_{A}-g_{i}^{r_{i}})$ since
\begin{align*}
\varphi^{-1}\left[
\varphi\left(  x_{i}^{\cdot_{A}m}\right)  \cdot_{B}\varphi\left(  x_{i}%
^{\cdot_{A}n}\right)  \right]  =\varphi^{-1}\left(  x_{i}^{\cdot_{B}m}%
\cdot_{B}x_{i}^{\cdot_{B}n}\right)
 =\varphi^{-1}\left(  x_{i}^{\cdot_{B}r_{i}}\right)  =\varphi^{-1}\left(
a_{i}(1_{B}-g_{i}^{r_{i}})\right)  .
\end{align*}
For $1\leq j<i\leq\theta$, we have%
\begin{gather*}
x_{i}\cdot_{\varsigma}x_{j}=\varphi^{-1}\left[  \varphi\left(  x_{i}\right)
\cdot_{B}\varphi\left(  x_{j}\right)  \right]  =\varphi^{-1}\left(  x_{i}%
\cdot_{B}x_{j}\right) \\
=\varphi^{-1}\left(  \chi_{j}\left(  g_{i}\right)  x_{j}\cdot_{B}x_{i}%
+a_{ij}(1_{B}-g_{i}g_{j})\right)  =\chi_{j}\left(  g_{i}\right)  x_{j}%
\cdot_{A}x_{i}+a_{ij}(1_{A}-g_{i}g_{j})
\end{gather*}
and%
\[
x_{j}\cdot_{\varsigma}x_{i}=\varphi^{-1}\left[  \varphi\left(  x_{j}\right)
\cdot_{B}\varphi\left(  x_{i}\right)  \right]  =\varphi^{-1}\left(  x_{j}%
\cdot_{B}x_{i}\right)  =x_{j}\cdot_{A}x_{i}%
\]
so that $x_{i}\cdot_{\varsigma}x_{j}-\chi_{j}\left(  g_{i}\right)  x_{j}%
\cdot_{\varsigma}x_{i}=a_{ij}(1_{A}-g_{i}g_{j}).$  By Lemma
\ref{lem:Relations}, $\mu$ is now completely determined.
\end{proof}

\begin{proposition}
\label{pro:gamma}
 Let $B:=A(a_{i},a_{ij}|1\leq
i,j\leq\theta)$ be an arbitrary lifting of $A.$ For $1 \leq s <
\theta$, let $A_{s}$ be the subalgebra of $A$ generated by
$x_{1},\ldots,x_{s}  ,g_{1},\ldots,g_{s}$. Then $A_{s}$ is generated
by elements of the form
$hx_{1}^{\cdot_{A}n_{1}}\cdot_{A}\cdots\cdot_{A}x_{s}^{\cdot_{A}n_{s}}$
where $0\leq n_{i}\leq r_{i}-1,1\leq
i\leq  s$. Let $\gamma \in Z_{H}
^{2}(A,K)\ $  and suppose that for all $1 \leq i \leq \theta$, $0
\leq n_i,m_i \leq r_{i}-1$,
\begin{equation}
\gamma\left(  x_{i}^{\cdot_{A}n_i}\otimes
x_{i}^{\cdot_{A}m_i}\right) =\varepsilon_{A}\left(
x_{i}^{\cdot_{A}n_i}\right) \varepsilon_{A}\left(
x_{i}^{\cdot_{A}m_i}\right)    \text{ if }n_i+m_i\leq r_{i}-1,
\label{form:gamma1}%
\end{equation}%
\begin{equation}
\gamma\left(  z \otimes x_{s}^{\cdot_{A}n_{s}}\right)
=\varepsilon_{A}\left( z \right)  \varepsilon_{A}\left(
x_{s}^{\cdot_{A}n_{s}}\right) \text{ for all  } z \in A_{s-1} \text{
and
}1 <  s\leq\theta, \label{form:gamma2}%
\end{equation}%
\begin{equation}
\gamma\left(  x_{i}^{\cdot_{A}(r_{i}-1)}\otimes x_{i}\right)  =a_{i}, \label{form:gamma3}%
\end{equation}%
\begin{equation}
     \gamma \left(  x_{i} \otimes x_{j} \right)
   =a_{ij},
  \text{ for all } 1 \leq j<i \leq  \theta. \label{form:gamma4}
\end{equation}
Then
$A^{\gamma }\cong B.$
\end{proposition}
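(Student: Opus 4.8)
The plan is to realise the twisted multiplication $m_{A^{\gamma}}=\gamma\ast m_{A}\ast\gamma^{-1}$ as the map $\varsigma=\varphi^{-1}m_{B}(\varphi\otimes\varphi)$ of Proposition \ref{pro:mu}, and then read off $A^{\gamma}\cong B$ from the fact that $\varphi$ is a bialgebra isomorphism. Write $\mu:=m_{A^{\gamma}}$. Since $\gamma\in Z_{H}^{2}(A,K)$, the twist $A^{\gamma}$ is a bialgebra by \cite{Doi-braided}, so $\mu$ is associative and, by (\ref{form: cocycle3}), unital with unit $1_{A}$; moreover $\mu$ is $H$-bilinear and $H$-balanced because $\gamma$ is. Thus $\mu$ meets the standing hypotheses of Lemma \ref{lem: basis} and Proposition \ref{pro:mu}, and it remains only to verify the conditions that pin $\mu$ down. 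Throughout I will use that $\gamma^{-1}$ inherits the vanishing behaviour of $\gamma$: computing the convolution inverse degree by degree along the coradical filtration, (\ref{form:gamma1}) and (\ref{form:gamma2}) force $\gamma^{-1}$ to agree with $\varepsilon\otimes\varepsilon$ on the same elements, while at the first nontrivial degree the relation $\gamma\ast\gamma^{-1}=\varepsilon\otimes\varepsilon$ together with (\ref{form:gamma3}) and (\ref{form:gamma4}) gives $\gamma^{-1}(x_{i}^{r_{i}-1}\otimes x_{i})=-a_{i}$ and $\gamma^{-1}(x_{i}\otimes x_{j})=-a_{ij}$.

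First I would check that $\mu$ preserves the basis of $A$ by verifying the two conditions of Lemma \ref{lem: basis}. For condition (1), I expand $\mu(x_{i}^{\cdot_{A}m}\otimes x_{i})$ via (\ref{form: Deltasquare}) and use $H$-bilinearity to strip grouplikes; every factor $\gamma(x_{i}^{a}\otimes x_{i})$ and $\gamma^{-1}(x_{i}^{c}\otimes x_{i})$ that occurs has $a+1,c+1\leq r_{i}-1$ when $m<r_{i}-1$, so by (\ref{form:gamma1}) all twisting contributions collapse and $\mu(x_{i}^{\cdot_{A}m}\otimes x_{i})=x_{i}^{\cdot_{A}m}\cdot_{A}x_{i}=x_{i}^{\cdot_{A}(m+1)}$. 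For condition (2), the coproduct pieces of a monomial $x_{1}^{n_{1}}\cdots x_{s-1}^{n_{s-1}}$ lie in $A_{s-1}$ and those of $x_{s}^{n_{s}}$ are powers of $x_{s}$ times grouplikes, so (\ref{form:gamma2}) and its inverse analogue make both $\gamma$ and $\gamma^{-1}$ act trivially; hence $\mu$ agrees with $m_{A}$ on these arguments.

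Next I would compute the two families of structure constants required by Proposition \ref{pro:mu}. Taking $m_{i}=r_{i}-1$ and $n_{i}=1$, the expansion of $\mu(x_{i}^{\cdot_{A}(r_{i}-1)}\otimes x_{i})$ has exactly three surviving terms: the one in which the first tensor leg of $x_{i}$ is $x_{i}$ itself contributes $\gamma(x_{i}^{r_{i}-1}\otimes x_{i})=a_{i}$ by (\ref{form:gamma3}); the term whose middle product would be $x_{i}^{r_{i}}$ vanishes because $x_{i}^{r_{i}}=0$ in $A$; and the term using $\gamma^{-1}(x_{i}^{r_{i}-1}\otimes x_{i})=-a_{i}$ contributes $-a_{i}g_{i}^{r_{i}}$, giving $x_{i}^{\cdot_{A}(r_{i}-1)}\cdot_{\mu}x_{i}=a_{i}(1_{A}-g_{i}^{r_{i}})$. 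For $1\leq j<i\leq\theta$, expanding $\mu(x_{i}\otimes x_{j})$ over the nine terms of $\Delta^{2}(x_{i})\otimes\Delta^{2}(x_{j})$ leaves only the contribution $\gamma(x_{i}\otimes x_{j})=a_{ij}$, the ordinary product $x_{i}\cdot_{A}x_{j}$, and the contribution $\gamma^{-1}(x_{i}\otimes x_{j})g_{i}g_{j}=-a_{ij}g_{i}g_{j}$, so $x_{i}\cdot_{\mu}x_{j}=x_{i}\cdot_{A}x_{j}+a_{ij}(1-g_{i}g_{j})$; since $x_{j}\in A_{i-1}$, (\ref{form:gamma2}) gives $x_{j}\cdot_{\mu}x_{i}=x_{j}\cdot_{A}x_{i}$, and the Nichols relation $x_{i}\cdot_{A}x_{j}=\chi_{j}(g_{i})x_{j}\cdot_{A}x_{i}$ in $A$ then yields $x_{i}\cdot_{\mu}x_{j}-\chi_{j}(g_{i})x_{j}\cdot_{\mu}x_{i}=a_{ij}(1-g_{i}g_{j})$.

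With $\mu$ shown to preserve the basis and to satisfy $x_{i}^{\cdot_{A}m_{i}}\cdot_{\mu}x_{i}^{\cdot_{A}n_{i}}=a_{i}(1_{A}-g_{i}^{r_{i}})$ and $x_{i}\cdot_{\mu}x_{j}-\chi_{j}(g_{i})x_{j}\cdot_{\mu}x_{i}=a_{ij}(1-g_{i}g_{j})$, Proposition \ref{pro:mu} forces $\mu=\varsigma$. Hence $A^{\gamma}=(A,\mu)=(A,\varsigma)$, and since $\varphi$ is a bialgebra isomorphism from $(A,\varsigma)$ onto $B$, we conclude $A^{\gamma}\cong B$. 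I expect the main obstacle to be the bookkeeping in the computation of $\mu(x_{i}^{\cdot_{A}(r_{i}-1)}\otimes x_{i})$: one must track the $q$-binomial coefficients coming from $\Delta^{2}$, confirm that the vanishing in (\ref{form:gamma1}) really kills every intermediate power $x_{i}^{c}$ with $0<c<r_{i}-1$ appearing in the $\gamma$- and $\gamma^{-1}$-factors, and correctly transport the grouplikes $g_{i}^{r_{i}-a}$ through the middle multiplication.
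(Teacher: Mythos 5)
Your proposal is correct and follows essentially the same route as the paper's own proof: establish that $\gamma^{-1}$ satisfies the analogues of (\ref{form:gamma1})--(\ref{form:gamma2}) and that $\gamma^{-1}(z\otimes x_{i})=-\gamma(z\otimes x_{i})$, verify via Lemma \ref{lem: basis} that $\mu=\gamma\ast m_{A}\ast\gamma^{-1}$ preserves the basis, compute $x_{i}^{\cdot_{A}(r_{i}-1)}\cdot_{\mu}x_{i}=a_{i}(1-g_{i}^{r_{i}})$ and $x_{i}\cdot_{\mu}x_{j}-\chi_{j}(g_{i})x_{j}\cdot_{\mu}x_{i}=a_{ij}(1-g_{i}g_{j})$, and invoke Proposition \ref{pro:mu}. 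The only cosmetic difference is that you phrase the derivation of the properties of $\gamma^{-1}$ as a filtration argument, where the paper simply evaluates $\gamma\ast\gamma^{-1}=\varepsilon\otimes\varepsilon$ on the relevant elements.
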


\begin{proof}  First we note that $(\ref{form:gamma1})$ and
$(\ref{form:gamma2})$ also hold for $\gamma^{-1}$.   To see this,
apply $\gamma \ast \gamma^{-1} = m_K(\varepsilon \otimes
\varepsilon)$ to elements of the form $x_{i}^{\cdot_{A}n_i}\otimes
x_{i}^{\cdot_{A}m_i}$ or $z \otimes x_s^{\cdot_An_s}$ where $z \in
A_{s-1}$.

\par Also, applying $\gamma \ast \gamma^{-1} = m_K(\varepsilon \otimes
\varepsilon)$ to elements of the form $z \otimes x_{i} $ where $z
\in A$, we see immediately that
\begin{equation}
\gamma^{-1}\left(  z\otimes x_{i}\right)  =-\gamma\left(  z\otimes
x_{i}\right)  ,\text{ for } 1  \leq i\leq\theta.\label{form:gamma-1}%
\end{equation}

  Let $\mu:=m_{A^{\gamma}}=\gamma\ast m_{A}\ast\gamma^{-1}.$
First we
 check that $\mu$ preserves the basis
of $A$. Let $i \in \{1, \ldots, \theta \}$, and write $x$ for $x_i$,
$g$ for $g_i$, $q$ for $q_i$, $r $ for $r_i$, $a$ for $a_i$. For $1
\leq m+1 \leq r-1$,
\begin{eqnarray*}
 && x^{  m}\cdot_{\mu}x
 \overset{(\ref{form: Deltasquare})}{ = }  \gamma( x^{  m}_{(1)} \otimes g) x^{  m}_{(2)}g
 \gamma^{-1}(x^{  m}_{(3)} \otimes x) +    \gamma( x^{  m}_{(1)} \otimes g)
 x^{
 m}_{(2)}x
 \gamma^{-1}(x^{  m}_{(3)} \otimes 1) \\ && +    \gamma( x^{  m}_{(1)} \otimes x)
 x^{
 m}_{(2)}
 \gamma^{-1}(x^{  m}_{(3)} \otimes 1)
 \overset{(\ref{form:gamma3}),(\ref{form:gamma-1})}{=}   x^{  m+1}+
 \delta_{m+1,r }[ a (1-g^{r })]    = x^{m+1},
\end{eqnarray*}
and Lemma \ref{lem: basis}(1) holds. A similar argument   shows that
Lemma \ref{lem: basis}(2) holds and so $\mu$ preserves the basis of
$A$.  Now, again denoting $x:=x_i$, etc,
\begin{eqnarray*}
&& x^{ \left(  r -1\right)  }\cdot_{\mu}x
\overset{(\ref{form:gamma1}) \text{ for } \gamma, \gamma^{-1}}{=}
 \gamma( x^{    r-1      }\otimes
x )  1_A + \gamma( g^{    r -1 }\otimes g )x ^{ r }   + \gamma^{-1}(
x^{   r -1  }\otimes
x )  g^{r } \\
&& \overset{(\ref{form:gamma3}), (\ref{form:gamma-1})}{=} -g ^{r }a
+a 1_{A}=a \left(  1-g ^{r }\right).
\end{eqnarray*}

For all $a,b\in\{1,\ldots,\theta\},$
\begin{align*}
x_{a}\cdot_{\mu}x_{b}    \overset{(\ref{form:
Deltasquare}),(\ref{form:gamma-1}) }{=}
 -g_{a}g_{b}\gamma\left(  x_{a}\otimes x_{b}\right)  +x_{a}x_{b}%
+\gamma\left(  x_{a}\otimes x_{b}\right)  1_{A}
   =x_{a}x_{b}+\gamma\left(  x_{a}\otimes x_{b}\right)  \left(  1_{A}%
-g_{a}g_{b}\right).
\end{align*}
Therefore for all $1\leq j<i\leq\theta$, since $x_ix_j =
\chi_j(g_i)x_jx_i$ in $A$,
\begin{align*}
&  x_{i}\cdot_{\mu}x_{j}-\chi_{j}\left(  g_{i}\right)  x_{j}\cdot_{\mu}x_{i}\\
&  =\left[  x_{i}\cdot_{A}x_{j}+\gamma\left(  x_{i}\otimes x_{j}\right)
\left(  1_{A}-g_{i}g_{j}\right)  \right]  -\chi_{j}\left(  g_{i}\right)
\left[  x_{j}\cdot_{A}x_{i}+\gamma\left(  x_{j}\otimes x_{i}\right)  \left(
1_{A}-g_{j}g_{i}\right)  \right]
\\
&  =\gamma\left(  x_{i}\otimes x_{j}\right)  \left(  1_{A}-g_{i}g_{j}\right)
-\chi_{j}\left(  g_{i}\right)  \gamma\left(  x_{j}\otimes x_{i}\right)
\left(  1_{A}-g_{j}g_{i}\right)
\\
&  =\left[  \gamma\left(  x_{i}\otimes x_{j}\right)  -\chi_{j}\left(
g_{i}\right)  \gamma\left(  x_{j}\otimes x_{i}\right)  \right]  (1-g_{i}%
g_{j})
\\
&  \overset{(\ref{form:gamma2}),(\ref{form:gamma4})}{=}
a_{ij}(1-g_{i}g_{j}).
\end{align*}
 The statement now
follows from Proposition \ref{pro:mu}.
\end{proof}

\begin{proposition}
\label{pro: useful3}  Let $B:=A(a_{i},a_{ij}|1\leq i,j\leq\theta)$
be an arbitrary lifting of $A$ as in the previous proposition.  Let
$\alpha \in Z^2_H(A,K)$ be defined as in Theorem \ref{thm:
connected}. Then $A^{\alpha}\cong B$.
\end{proposition}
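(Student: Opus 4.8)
The plan is to deduce the statement directly from Proposition \ref{pro:gamma}. By Theorem \ref{thm: connected} we already know that $\alpha \in Z_H^2(A,K)$, so it suffices to verify that $\alpha$ satisfies the four hypotheses \eqref{form:gamma1}, \eqref{form:gamma2}, \eqref{form:gamma3}, \eqref{form:gamma4} for the scalars $a_i,a_{ij}$ defining $B$; the conclusion $A^\alpha \cong B$ is then immediate from that proposition. Recall that $\alpha = \prod_{\tau \in T}\alpha_\tau$ is an ordered convolution product whose constituents are the cocycles $\gamma_i$ of Proposition \ref{pr: gamma(ai)} and $\gamma_{ij}$ of Proposition \ref{pr: gamma a}. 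Each constituent is $H$-bilinear, $H$-balanced, and equal to $\varepsilon$ off a single bidegree: $\gamma_i = \varepsilon$ except on $x_i^{\,p}\otimes x_i^{\,r_i-p}$, and $\gamma_{ij}=\varepsilon$ except on $x_j^{\,m}\otimes x_i^{\,m}$.

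The main tool I would use is a localization principle. When $\alpha$ is applied to an element $u \otimes v$ of $A \otimes A$ whose two tensor factors involve only a prescribed subset of the generators $x_1,\dots,x_\theta$, the iterated comultiplication of $u\otimes v$ keeps, in each slot fed to a constituent cocycle, the first argument built from the generators occurring in $u$ and the second from those occurring in $v$ (after grouplikes are stripped by $H$-bilinearity and $H$-balancedness). Hence on each term at most one constituent can be nonzero, and since $\varepsilon$ is the convolution unit, $\alpha$ collapses on the relevant subcoalgebra to that single constituent, or to $\varepsilon$ when none matches.

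I would then run through the four conditions. For \eqref{form:gamma1}, on $x_i^{\,n}\otimes x_i^{\,m}$ with $n+m\le r_i-1$ every comultiplication piece has total $x_i$-degree at most $n+m < r_i$, so even $\gamma_i$ acts as $\varepsilon$ and $\alpha(x_i^{\,n}\otimes x_i^{\,m}) = \varepsilon_A(x_i^{\,n})\varepsilon_A(x_i^{\,m})$. For \eqref{form:gamma3}, on powers of a single $x_i$ the only surviving constituent is $\gamma_i$, so $\alpha(x_i^{\,r_i-1}\otimes x_i) = \gamma_i(x_i^{\,r_i-1}\otimes x_i) = a_i$. For \eqref{form:gamma2}, with $z \in A_{s-1}$ and the second factor a power of $x_s$, no $\gamma_k$ or $\gamma_{k\ell}$ can pair a nontrivial first slot drawn from $x_1,\dots,x_{s-1}$ with a nontrivial second slot drawn from $x_s$, so $\alpha(z\otimes x_s^{\,n_s}) = \varepsilon_A(z)\varepsilon_A(x_s^{\,n_s})$. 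Finally for \eqref{form:gamma4}, on $x_i\otimes x_j$ with $j<i$ only the constituent attached to the pair $\{j,i\}$ survives, and its value is precisely the scalar deforming the relation $x_ix_j = \chi_j(g_i)x_jx_i + a_{ij}(1-g_ig_j)$ in $B$, so $\alpha(x_i\otimes x_j) = a_{ij}$.

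I expect the principal technical point to be making the localization precise: one must control the iterated comultiplication in $A\otimes A$ carefully enough to guarantee that all cross terms vanish and that only the matching constituent contributes on each relevant element, rather than invoking this informally. A secondary delicate point is the scalar bookkeeping in \eqref{form:gamma4}: one must match the value read off from the surviving $\gamma_{ji}$ against the parameter $a_{ij}$ of $B$, keeping track of the subscript convention for the constituent cocycles and of the identity $a_{ji} = -\chi_j(g_i)^{-1}a_{ij}$ relating the two orderings of a single relation. Once these checks are in place, Proposition \ref{pro:gamma} gives $A^\alpha \cong B$.
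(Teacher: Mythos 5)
Your proposal is correct and follows essentially the same route as the paper: both reduce the statement to Proposition \ref{pro:gamma} and verify conditions (\ref{form:gamma1})--(\ref{form:gamma4}) by observing that, on the relevant elements, the convolution product $\alpha=\prod_{\tau}\alpha_{\tau}$ collapses to a single factor (the paper localizes to the connected-component cocycle $\alpha_{\tau(i)}$ via (\ref{form:alphazi}); you push one step further to the individual constituents $\gamma_{i}$, $\gamma_{ij}$, which is the same idea). The two caveats you flag --- making the localization of the iterated comultiplication precise, and reconciling the subscript conventions for $a_{ij}$ versus $a_{ji}$ in (\ref{form:gamma4}) --- are exactly the points the paper's own one-line conclusion leaves implicit.
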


\begin{proof} It suffices to check that the conditions in Proposition
\ref{pro:gamma} hold.

Let $T=\left\{  \tau_{1},\ldots,\tau_{s}\right\}  .$ For
$i\in\left\{ 1,\ldots,\theta\right\}  $  there exists a unique
$\tau\left( i\right) \in T$ such that $i\in C_{\tau\left(  i\right)
}$. Since $\alpha_{\tau_j} = \varepsilon_{A \otimes A}$ on the
sub-bialgebra of $A$ generated by the $x_i$, $i \notin C_{\tau_j}$,
then for any $z \in A$, $0 \leq u \leq r_i -1$,
\begin{equation} \label{form:alphazi}
\alpha\left(  z\otimes x_{i}^{u}\right) =\alpha_{\tau_{1}}
\ast\cdots\ast\alpha_{\tau_{s}}\left(  z\otimes x_{i}^{ u}\right) =
\alpha_{\tau(i)} \left( z\otimes x_{i}^{ u}\right), \text{ and }
\alpha\left(   x_{i}^{u} \otimes z\right)   = \alpha_{\tau(i)}
\left( x_{i}^{ u}\otimes z \right).
\end{equation}
Now (\ref{form:gamma1})- (\ref{form:gamma4}) follow immediately from
the definitions of the cocycles $\gamma_i$ and $\gamma_{ij}$.
\end{proof}
\vspace{2mm}
\begin{center}
\textbf{Acknowledgement}:  Thanks to the referee for his/her careful
reading of this note.
\end{center}


\begin{thebibliography}{99999}                                                                                            %


\bibitem[AS1]{ASannals} N. Andruskiewitsch and H.-J. Schneider, \emph{On the
classification of finite-dimensional pointed Hopf algebras}, Ann. of
Math. \textbf{171} (1) (2010) 375-417.




\bibitem[AS2]{AS1}N. Andruskiewitsch and H.-J. Schneider, \emph{Lifting of
quantum linear spaces and pointed Hopf algebras of order $p^{3}$}, J. Algebra
\textbf{209} (1998), 658--691.

\bibitem[ABM1]{ABM}A. Ardizzoni, M. Beattie and C. Menini, \emph{Cocyle
deformations for Hopf algebras with a coalgebra projection}, J. Algebra
\textbf{324}(4) (2010), 673-705.

  \bibitem[ABM2]{ABM-gauge} A. Ardizzoni, M. Beattie and C. Menini,
\emph{Gauge Deformations for Hopf Algebras with the Dual Chevalley
Property}, submitted. (arXiv:1012.4935)



\bibitem[AMS]{A.M.S.}A. Ardizzoni, C. Menini and D. Stefan, \emph{A Monoidal
Approach to Splitting Morphisms of Bialgebras}, Trans. Amer. Math. Soc.,
\textbf{359} (2007), 991--1044.







\bibitem[BDG]{bdgconstructing}Beattie, M.; D\u{a}sc\u{a}lescu, S.;
Gr\"{u}nenfelder, L. \emph{Constructing pointed Hopf algebras by Ore
extensions}. J. Algebra \textbf{225} (2000), no. 2, 743--770.

\bibitem[BC]{BichonCarnovale}Bichon, J.; Carnovale, G. \emph{Lazy cohomology:
an analogue of the Schur multiplier for arbitrary Hopf algebras}. J.
Pure \& Appl. Algebra \textbf{204} (2006), 627-665.

\bibitem[C]{Chen}Chen, Huixiang, \emph{Skew pairing, cocycle deformations
and double crossproducts}, Acta Math. Sinica, English Series 1999, Vol. 15,
No.2, 225-234.

\bibitem[D]{Doi-braided}Y. Doi, \emph{Braided bialgebras and quadratic
bialgebras}, Comm. Algebra \textbf{21} (1993), no. 5, 1731--1749.





\bibitem[GM]{Grunenfelder-Mastnak} L. Gr\"{u}nenfelder and M. Mastnak, \emph{Pointed
Hopf algebras as cocycle deformations}, preprint. (arXiv:1010.4976)

\bibitem[K]{Kassel}C. Kassel, \emph{Quantum Groups}, GTM 155, Springer, New
York, 1995.

\bibitem[Ma1]{Mas2}A. Masuoka, \emph{Abelian and non-abelian second
cohomologies of quantized enveloping algebras}, J. Algebra
\textbf{320} (2008), 1-47.

\bibitem[Ma2]{Mas1}A. Masuoka, \emph{Defending the negated Kaplansky
conjecture}, Proc. Amer. Math. Soc. \textbf{129} (2001), 3185--3192.

\bibitem[Mo]{Mo}S. Montgomery, \emph{Hopf Algebras and their actions on rings,
}CMBS Regional Conference Series in Mathematics \textbf{82}, 1993.

\bibitem[R]{Rad}D. E. Radford, \emph{The Structure of Hopf Algebras with a
Projection}, J. Algebra \textbf{92} (1985), 322--347.





\bibitem[S]{Sw}M. Sweedler, \emph{Hopf Algebras}, Benjamin, New York, 1969.
\end{thebibliography}
\end{document}